\newcommand\Tau{\mathcal{T}}
\patchcmd{\thebibliography}{\section*{\refname}}{}{}{}
\newcommand{\Z}{\mathbb{Z}}
\newcommand{\cO}{\mathcal{O}}
\newcommand{\ord}{\text{\rm ord}}
\newcommand{\lcm}{\text{\rm lcm}}
\newcommand{\Poi}{\text{\rm Poisson}}
\newcommand\blfootnote[1]{%
  \begingroup
  \renewcommand\thefootnote{}\footnote{#1}%
  \addtocounter{footnote}{-1}%
  \endgroup
}
\theoremstyle{plain}
\newtheorem{thm}{Theorem}
\newtheorem{lem}{Lemma}[section]
\newtheorem{cor}[thm]{Corollary}
\newtheorem{prop}[lem]{Proposition}
\newtheorem{alg}[lem]{Algorithm}
\theoremstyle{definition}
\newtheorem{defi}[lem]{Definition}
\newtheorem{ques}[lem]{Question}
 \newtheorem*{prob*}{Problem}
\theoremstyle{remark}
\begin{document}

\title{Periodic solutions of one-dimensional cellular automata with random rules}
\author{{\sc Janko Gravner} and {\sc Xiaochen Liu}\\
Department of Mathematics\\University of California\\Davis, CA 95616\\{\tt gravner{@}math.ucdavis.edu, xchliu{@}math.ucdavis.edu}}

\maketitle
\bibliographystyle{plain}

\begin{abstract}
We study cellular automata with randomly selected rules.
Our setting are two-neighbor rules with a large number $n$ of states.
The main quantity we analyze is the asymptotic probability, as $n \to \infty$, that the random rule has a periodic solution with given spatial and temporal periods.
We prove that this limiting probability is non-trivial when the spatial and temporal periods are confined to a finite range.
The main tool we use is the Chen-Stein method for Poisson approximation.
The limiting probability distribution of the smallest temporal period for a given spatial period is deduced as a corollary and relevant empirical simulations are presented.
\end{abstract}

\blfootnote{\emph{Keywords}: Cellular automaton, periodic 
solution, Poisson approximation, random rule.}
\blfootnote{AMS MSC 2010:  60K35, 37B15, 68Q80.}


\section{Introduction}
We investigate one-dimensional cellular automata (CA), a class of temporally and spatially discrete dynamical systems, in which the update rule is selected at random.
Our focus is the asymptotic behavior of the probability that such random CA has a periodic solution with fixed spatial and temporal periods, as $n$, the number of states, goes to infinity.
This complements the work in \cite{gl2}, where the limiting behavior of the longest temporal period with a given spatial period is explored.
We assume the simplest nontrivial setting of two-neighbor rules.

The (\textbf{spatial}) \textbf{configuration} at time $t$ 
of a one-dimensional CA with \textbf{number of states} $n$ 
is a function 
$\xi_t$ that assigns to every site $x\in \Z$ its
\textbf{state} $\xi_t(x) \in \mathbb{Z}_n = \{0, 1, \dots, n - 1\}$.
The evolution of spatial configurations is given by 
a local 2-neighbor \textbf{rule} $f: \mathbb{Z}_n^2 \to \mathbb{Z}_n$ that updates $\xi_t$ to $\xi_{t+1}$ as follows: 
$$\xi_{t+1}(x) = f(\xi_{t}(x - 1), \xi_{t}(x)), \quad \text{for all }x \in \mathbb{Z}.$$
We abbreviate $f(a, b) = c$ as $a\underline{b} \mapsto c$.
We give a rule by listing its values for all pairs in reverse alphabetical order, from $(n - 1, n - 1)$ to $(0, 0)$.

Given $\xi_0$, the update rule determines the \textbf{trajectory} $\xi_t$, $t\in\Z_+=\{0,1,\ldots\}$, or, equivalently, the
the \textbf{space-time configuration}, which is the map $(x, t)\mapsto\xi_t(x)$ from 
$\mathbb{Z} \times \mathbb{Z}_+$ to $\mathbb{Z}_n$. By convention, a picture of this map is a painted grid, in which the temporal axis is oriented downward, the spatial axis is oriented rightward, and each state is given as a different color. 
To give an example, a piece of the space-time configuration is presented in Figure \ref{figure: PS and tile}.
In this figure, we have three states, i.e., $n = 3$, and the rule is \textit{021102022}, i.e.,
$2\underline{2}\mapsto 0, 
2\underline{1}\mapsto 2, 
2\underline{0}\mapsto 1, 
1\underline{2}\mapsto 1, 
1\underline{1}\mapsto 0, 
1\underline{0}\mapsto 2, 
0\underline{2}\mapsto 0, 
0\underline{1}\mapsto 2$ and 
$0\underline{0}\mapsto 2$.

\begin{figure}[h] 
    \centering
    \includegraphics[scale = 0.1, trim = 5cm 1cm 3cm 1cm, clip]{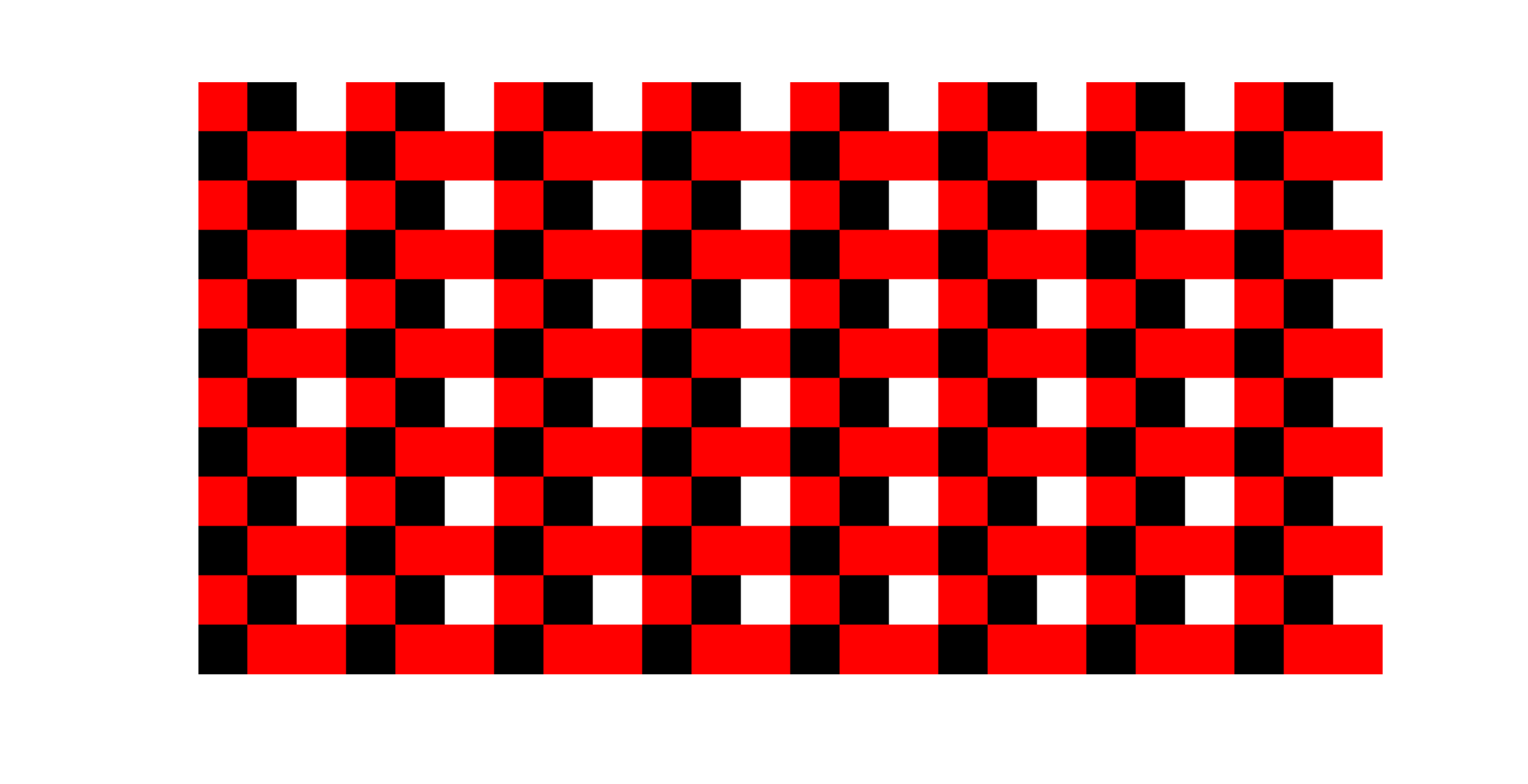}\\


    \caption{A piece of the space-time configuration of a 3-state rule. 
    In the space-time configuration, 0, 1 and 2 are represented by white, red and black cells, respectively.}
    \label{figure: PS and tile}
  \end{figure}
The space-time configuration in Figure \ref{figure: PS and tile} exhibits periodicity in both space and time.
In the literature \cite{boyle2007jointly}, such a configuration is called doubly or jointly periodic.
Since these are the only objects we study, we simply refer to such a configuration as a \textbf{periodic solution} (PS).
To be precise, start with a periodic spatial configuration $\xi_0$, such that there is a $\sigma > 0$ satisfying $\xi_0(x) = \xi_0(x + \sigma)$, for all $x \in \mathbb{Z}$.
Run a CA rule $f$ starting with $\xi_0$.
If we have $\xi_\tau(x) = \xi_0(x)$, for all $x \in \mathbb{Z}$ and that $\sigma$ and $\tau$ are both minimal, then we have found a periodic solution of \textbf{temporal period} $\tau$ and \textbf{spatial period} $\sigma$.
A \textbf{tile} is any rectangle with $\tau$ rows and $\sigma$ columns within this space-time configuration.
We interpret a tile as a configuration on a discrete torus;
we will not distinguish between spatial and temporal translations of a PS, and therefore between either rotations of a tile. The tile of a PS is by definition unique and we 
will identify a PS with its tile. 
As an example, in Figure \ref{figure: PS and tile}, we start with the initial configuration $\xi_0 = 120^\infty = \dots 120120120\dots$ (we give a configuration as a bi-infinite 
sequence when the position of the origin is clear or unimportant). 
After 2 updates, we have $\xi_2(x) = \xi_0(x)$, for all $x \in \mathbb{Z}$, thus the PS has temporal period 2 and spatial period 3.
Its tile is 
$
\begin{matrix}
1 & 2 & 0\\
2 & 1 & 1\\
\end{matrix}
$
.

CA that exhibit temporally periodic or jointly periodic behavior have been explored to some degree in the literature, and we give a brief review of some highlights. The foundational work 
is commonly considered to be~\cite{martin1984algebraic}. This 
paper, together with its successors~\cite{jen1, jen2}, focuses on algebraic methods to investigate additive CA, but also 
lays the foundation for more general rules. 
  More recent papers on temporal periodicity of additive binary rules include~\cite{cordovil1986periodic} 
  and~\cite{misiurewicz2006iterations}. The literature on 
non-additive rules is more scarce, but includes notable 
works \cite{boyle2007jointly} and \cite{boyle1999periodic} on 
the density of periodic configurations, which use both rigorous and experimental methods. 
A method of finding temporally periodic trajectories is discussed in \cite{xu2009dynamical}, which reiterates the 
utility of the relation between periodic configurations and cycles on graphs induced by the CA rules, introduced in \cite{martin1984algebraic}. This approach is useful
in the present paper as well.
Papers investigating long temporal periods of CA also include \cite{stevens1993transient, stevens1999construction}, as well as our companion papers \cite{gl2,gl3}. To mention another 
take on periodicity, the paper \cite{gravner2012robust} 
introduces \textit{robust} PS, which are those 
that expand into any environment with positive speed, 
and investigates their existence in all range 2 (i.e., 3-neighbor) binary CA.

We now present a formal setting to investigate PS from random rules, which, to our knowledge, have not been explored before.
Our rule space $\Omega_n$ consists of $n^{n^2}$ rules and we assign a uniform probability $\mathbb{P}$ to each rule $f$, therefore $\mathbb{P}(\{f\}) = 1/n^{n^2}$.
Let $\mathcal{P}_{\tau, \sigma, n}$ be the random set of PS with temporal period $\tau$ and spatial period $\sigma$ of such a randomly chosen CA rule.
The main quantity we are interested in is $\lim\mathbb{P}\left( \mathcal{P}_{\tau, \sigma, n} \neq \emptyset \right)$ as $n \to \infty$ for a fixed pair of $(\tau, \sigma)$.
In words, our focus is the limiting probability that a random CA rule has a PS with given temporal and spatial periods.
In the following theorem, we prove that this limit is nontrivial for any $\tau$ and $\sigma$. Define 
\begin{equation}\label{equation: lambda}
\displaystyle \lambda_{\tau, \sigma} = 
\frac{1}{\tau\sigma}\sum_{d \bigm| \gcd(\tau, \sigma)} \varphi(d)d,
\end{equation}
where $\varphi$ the Euler totient function.

\begin{thm}\label{theorem: main 1}
For any fixed integers $\tau\ge 1$ and $\sigma \ge 1$, $\mathbb{P}\left(\mathcal{P}_{\tau, \sigma, n} \neq \emptyset \right) \to 1 - \exp\left(-\lambda_{\tau, \sigma} \right)$ as $n \to \infty$.
\end{thm}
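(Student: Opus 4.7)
The plan is to prove the stronger statement that $|\mathcal{P}_{\tau,\sigma,n}|$ converges in distribution to $\mathrm{Poisson}(\lambda_{\tau,\sigma})$ as $n\to\infty$, which immediately yields the claim. The method is the Chen-Stein Poisson approximation, applied to indicator variables attached to patterns on the torus $\mathbb{T}=\mathbb{Z}_\tau\times\mathbb{Z}_\sigma$.

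For each pattern $\xi:\mathbb{T}\to\mathbb{Z}_n$, let $A_\xi$ be the event that $f(\xi(t,x-1),\xi(t,x))=\xi(t+1,x)$ for all $(t,x)\in\mathbb{T}$, and let $H_\xi\leq\mathbb{T}$ be the stabilizer of $\xi$ under the translation action of $\mathbb{T}$ on itself. Since each PS corresponds to an orbit of such patterns, one has $|\mathcal{P}_{\tau,\sigma,n}|=\sum_{\xi}(|H_\xi|/(\tau\sigma))\,\mathbf{1}_{A_\xi}$, summed over $\xi$ with minimal axis periods exactly $(\tau,\sigma)$. This minimality condition is equivalent to $H_\xi$ injecting into both coordinate projections, which forces $H_\xi$ to be cyclic of order $d\mid g:=\gcd(\tau,\sigma)$; an elementary generator count (pairs $(a,b)\in\mathbb{Z}_\tau\times\mathbb{Z}_\sigma$ with $\mathrm{ord}(a)=\mathrm{ord}(b)=d$, divided by the $\varphi(d)$ generators of each such cyclic group) shows there are exactly $\varphi(d)$ such eligible subgroups of each order $d$.

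For the first moment, fix such a subgroup $H$ of order $d$. Patterns $\xi$ with $H_\xi\supseteq H$ are in bijection with functions on the fundamental domain $\mathbb{T}/H$ of size $\tau\sigma/d$, yielding $n^{\tau\sigma/d}$ patterns in total. For a generic such $\xi$, meaning one whose $\tau\sigma/d$ input pairs $(\xi(t,x-1),\xi(t,x))$ along a fundamental domain are all distinct, $\mathbb{P}(A_\xi)=n^{-\tau\sigma/d}$. Union bounds show that non-generic patterns number $O(n^{\tau\sigma/d-2})$ and that patterns whose stabilizer strictly contains $H$ number $O(n^{\tau\sigma/d'})$ for some $d'>d$; both contribute $o(1)$ to the expectation. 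Thus each such $H$ contributes $(d/(\tau\sigma))\cdot n^{\tau\sigma/d}\cdot n^{-\tau\sigma/d}+o(1)=d/(\tau\sigma)+o(1)$, and summing over the $\varphi(d)$ subgroups for each $d\mid g$ gives $E[|\mathcal{P}_{\tau,\sigma,n}|]\to\lambda_{\tau,\sigma}$.

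For the Chen-Stein approximation itself, one indexes over orbits (PSs) and verifies that the dependency sums $b_1$ and $b_2$ tend to zero. Two PS indicators are independent whenever their constraint sets, meaning the input pairs $(\xi(t,x-1),\xi(t,x))$ imposed on $f$, are disjoint; the dependency neighborhood of a PS consists of other PSs sharing at least one input pair. For each shared pair, three cells of the companion pattern are constrained (two input cells and the matching output cell), giving $O(n^{\tau\sigma-3})$ neighbors, whence $b_1=O(1/n^2)$ and $b_2=O(1/n^2)$. The main obstacle will be the bookkeeping in the first-moment computation: one must ensure that non-generic patterns and patterns with oversized stabilizer contribute only to lower order, so that the expected count collapses cleanly to the formula $(1/(\tau\sigma))\sum_{d\mid g}\varphi(d)d$. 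The Chen-Stein verification is more routine but still demands careful tracking of how many tiles share prescribed numbers of input pairs with a given tile.
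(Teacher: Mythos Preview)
Your plan is the same Chen--Stein argument the paper uses, and your stabilizer parametrization is closely tied to the paper's: simple tiles with $s=\tau\sigma/d$ states are exactly the patterns with cyclic stabilizer of order $d$ and all cells in a fundamental domain distinct, and both routes arrive at the same first moment. The paper organizes the bookkeeping differently, decomposing tiles by the \emph{lag} $\ell=p(T)-s(T)$ rather than by stabilizer; this pays off precisely at the step you flag as the main obstacle. Your assertion that non-generic patterns ``number $O(n^{\tau\sigma/d-2})$ and contribute $o(1)$'' is not self-contained: a non-generic pattern has \emph{fewer} distinct input pairs, so $\mathbb{P}(A_\xi)=n^{-p}$ with $p<\tau\sigma/d$ can be \emph{larger} than $n^{-\tau\sigma/d}$, and the count alone does not kill the contribution. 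What is actually needed is that any consistent non-generic pattern with exact stabilizer $H$ has $p\ge s+1$ (lag at least $1$), after which one sums $\binom{n}{s}C(s)\,n^{-(s+1)}=O(1/n)$ over $s$; this is exactly the paper's decomposition by $\ell$, and proving $p\ge s+1$ amounts to the structural characterization of simple tiles (their Lemmas~3.1--3.3). For the Chen--Stein error, your ``three cells constrained'' device (shared input pair forces a matching output) is a different and somewhat slicker mechanism than the paper's Lemma~3.6 (two simple tiles sharing a state must disagree at some right-neighbor, yielding one extra assignment); both give $O(1/n)$, though your version as written addresses only the trivial-stabilizer case and would need the same care for general $d$ and for the non-generic tiles still present in your index set.
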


We also prove a more general result concerns the number of PS with a range of periods.
Assume $\Tau, \Sigma\subset \mathbb{N} = \{1, 2, \dots\}$,  and define 
$\mathcal{P}_{\Tau, \Sigma, n}=\mathcal{P}_{\Tau, \Sigma, n} (f) = \bigcup_{(\tau, \sigma) \in \Tau \times \Sigma} \mathcal{P}_{\tau, \sigma, n}$ and 
\begin{equation}\label{equation: Lambda}
\displaystyle \lambda_{\Tau, \Sigma} = \sum_{(\tau, \sigma) \in \Tau\times \Sigma}
\lambda_{\tau, \sigma}.
\end{equation}

\begin{thm}\label{theorem: main 2}
For a finite $\Tau\times \Sigma \subset \mathbb{N} \times \mathbb{N}$, 
$\mathbb{P}\left(\mathcal{P}_{\Tau, \Sigma, n} \neq \emptyset \right) \to 1 - \exp\left( -\lambda_{\Tau, \Sigma} \right)$ as $n \to \infty$.
\end{thm}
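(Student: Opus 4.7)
The strategy is to apply the Chen--Stein method for Poisson approximation to $N = |\mathcal{P}_{\Tau,\Sigma,n}|$, the total number of PS whose periods lie in $\Tau \times \Sigma$. Once $N$ converges in distribution to $\mathrm{Poisson}(\lambda_{\Tau,\Sigma})$, the theorem follows from $\mathbb{P}(N = 0) \to e^{-\lambda_{\Tau,\Sigma}}$. Since $\Tau \times \Sigma$ is finite, the argument decomposes as a finite sum of single-shape contributions (which also establishes Theorem~\ref{theorem: main 1}), augmented by cross-shape overlap terms in the dependency analysis.

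For the expected count, I index the PS of shape $(\tau,\sigma) \in \Tau \times \Sigma$ by orbits $\alpha$ of tiles $T : \Z_\tau \times \Z_\sigma \to \Z_n$ under the $\Z_\tau \times \Z_\sigma$ translation action, restricted to those with minimal periods exactly $(\tau,\sigma)$; let $X_\alpha$ be the indicator that $\alpha$ is a PS of the random rule. The key combinatorial observation is that the stabilizer $H$ of such a tile must have trivial intersection with each coordinate axis, which forces both coordinate projections of $H$ to be injective, so $H$ is cyclic of some order $d \mid \gcd(\tau,\sigma)$; a short count of generators gives exactly $\varphi(d)$ such subgroups of order $d$. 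The $H$-symmetry reduces the rule-constraints to $\tau\sigma/d$ distinct equations (generically), each satisfied with probability $1/n$, while there are $\sim (d/\tau\sigma) \cdot n^{\tau\sigma/d}$ orbits of stabilizer order $d$. Combining these recovers the contribution $\sum_{d \mid \gcd(\tau,\sigma)} \varphi(d) d/(\tau\sigma) = \lambda_{\tau,\sigma}$ from each shape, and summing over $(\tau,\sigma) \in \Tau \times \Sigma$ yields $\mathbb{E}[N] \to \lambda_{\Tau,\Sigma}$.

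For the Chen--Stein step, the indicators $X_\alpha$ and $X_\beta$ are independent whenever their tiles share no input pair $(T(t,x-1), T(t,x))$, since the constraints they place on $f$ then involve disjoint arguments; accordingly, set $B_\alpha$ to be the set of orbits sharing at least one input pair with $\alpha$. The number of tiles of shape $(\tau',\sigma')$ containing a specified input pair is $O(n^{\tau'\sigma' - 2})$, which together with $\mathbb{E}[X_\beta] \sim n^{-\tau'\sigma'}$, a finite sum over shapes, and $\mathbb{E}[N] = O(1)$, yields $b_1 = \sum_\alpha \sum_{\beta \in B_\alpha} \mathbb{E}[X_\alpha] \mathbb{E}[X_\beta] = O(n^{-2})$. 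The hardest step will be bounding $b_2 = \sum_\alpha \sum_{\beta \in B_\alpha \setminus \{\alpha\}} \mathbb{E}[X_\alpha X_\beta]$: when $\alpha$ and $\beta$ share $k$ input pairs, $\mathbb{E}[X_\alpha X_\beta]$ equals $n^{-(c_\alpha + c_\beta - k)}$ when the outputs imposed on shared inputs are consistent (and $0$ otherwise), where $c_\alpha, c_\beta$ are the numbers of distinct input pairs, and one must verify that the combinatorial growth in the number of overlap patterns is offset by this probability. Since the needed combinatorial input is local---depending only on how often a pair $(a,b)$ can appear within a tile---the single-shape analysis underlying Theorem~\ref{theorem: main 1} extends to the multi-shape case by summing over the finitely many shape pairs in $(\Tau \times \Sigma)^2$, giving $b_2 \to 0$ and hence the desired Poisson convergence via $d_{TV}(\mathcal{L}(N), \mathrm{Poisson}(\lambda_{\Tau,\Sigma})) \le b_1 + b_2$.
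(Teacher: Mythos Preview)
Your overall strategy---Chen--Stein for the count of PS with periods in $\Tau\times\Sigma$, then summing the single-shape analysis over the finitely many shape pairs---is exactly the paper's. The organization differs: the paper first strips off tiles with positive lag $\ell=p(T)-s(T)\ge 1$ by a direct first-moment bound and runs Chen--Stein only over \emph{simple} tiles ($\ell=0$), whereas you run Chen--Stein over all tile-orbits and rely on ``generically'' to dismiss the rest. Your stabilizer decomposition is, for simple tiles, equivalent to the paper's: a simple tile with $s$ states has translation stabilizer of order exactly $\tau\sigma/s$, and conversely a stabilizer-$d$ tile with all $\tau\sigma/d$ fundamental-domain values distinct is simple. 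But ``generically'' hides precisely the lag argument---a non-generic tile with $s$ states has $p\ge s+1$ constraints, so contributes $O(n^s)\cdot n^{-(s+1)}=O(n^{-1})$---and you should make that explicit. Also, your $b_1=O(n^{-2})$ is not correct as stated: $\mathbb{E}[X_\beta]$ is $n^{-\tau'\sigma'/d}$, not $n^{-\tau'\sigma'}$, for stabilizer order $d>1$, and when $\sigma=\sigma'=1$ the shared input pair is of the form $(a,a)$, so fixing it costs only one factor of $n$. The correct bound is $b_1=O(n^{-1})$, which still suffices.

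The genuine gap is $b_2$. You correctly flag it as the hardest step and write that ``one must verify'' the combinatorics, but you never supply the key idea, and the naive count does \emph{not} close. The paper's crucial ingredient is its Lemma on simple tiles sharing a state: if two different simple tiles $T_1,T_2$ share at least one state, then some shared state $a$ has different right-neighbors in $T_1$ and $T_2$, producing one \emph{extra} independent constraint. Thus if $T_1$ has $s_1$ states and shares $h\ge 1$ of them with $T_2$ (having $s_2$ states), the total number of distinct input pairs is at least $s_1+(s_2-h)+1$, giving $\mathbb{E}[X_{T_1}X_{T_2}]\le n^{-(s_1+s_2-h+1)}$; combined with the $O(n^{s_1+s_2-h})$ count of such pairs this yields $O(n^{-1})$. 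Without that extra $+1$ the bound is only $O(1)$. In your organization by $k$ shared input pairs, the same obstruction appears when the $k$ pairs form full cycles (entire rows) of both tiles, so that the number $m$ of states they involve equals $k$; you would then need to argue that consistency of outputs on a shared row forces the next rows to coincide as well, hence $\alpha=\beta$, but this argument is absent from your proposal.
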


We define the random variable
$$Y_{\sigma, n} = \min\{\tau: \mathcal{P}_{\tau, \sigma, n} \neq \emptyset\}$$
to be the smallest temporal period of a PS with spatial period $\sigma$ of a randomly selected $n$-state rule. 
Figure~\ref{figure: 4 examples} provides four examples of rules $f$, with $Y_{4, 3}(f) = 1, 2, 3$ and $4$.
As a consequence of Theorem \ref{theorem: main 2}, for a given $\sigma > 0$, the random variable $Y_{\sigma, n}$ is stochastically bounded, in the sense of the following corollary.

\begin{figure}%
    \centering
    \subfloat[\textit{012200210}]{{\includegraphics[width=5cm, clip]{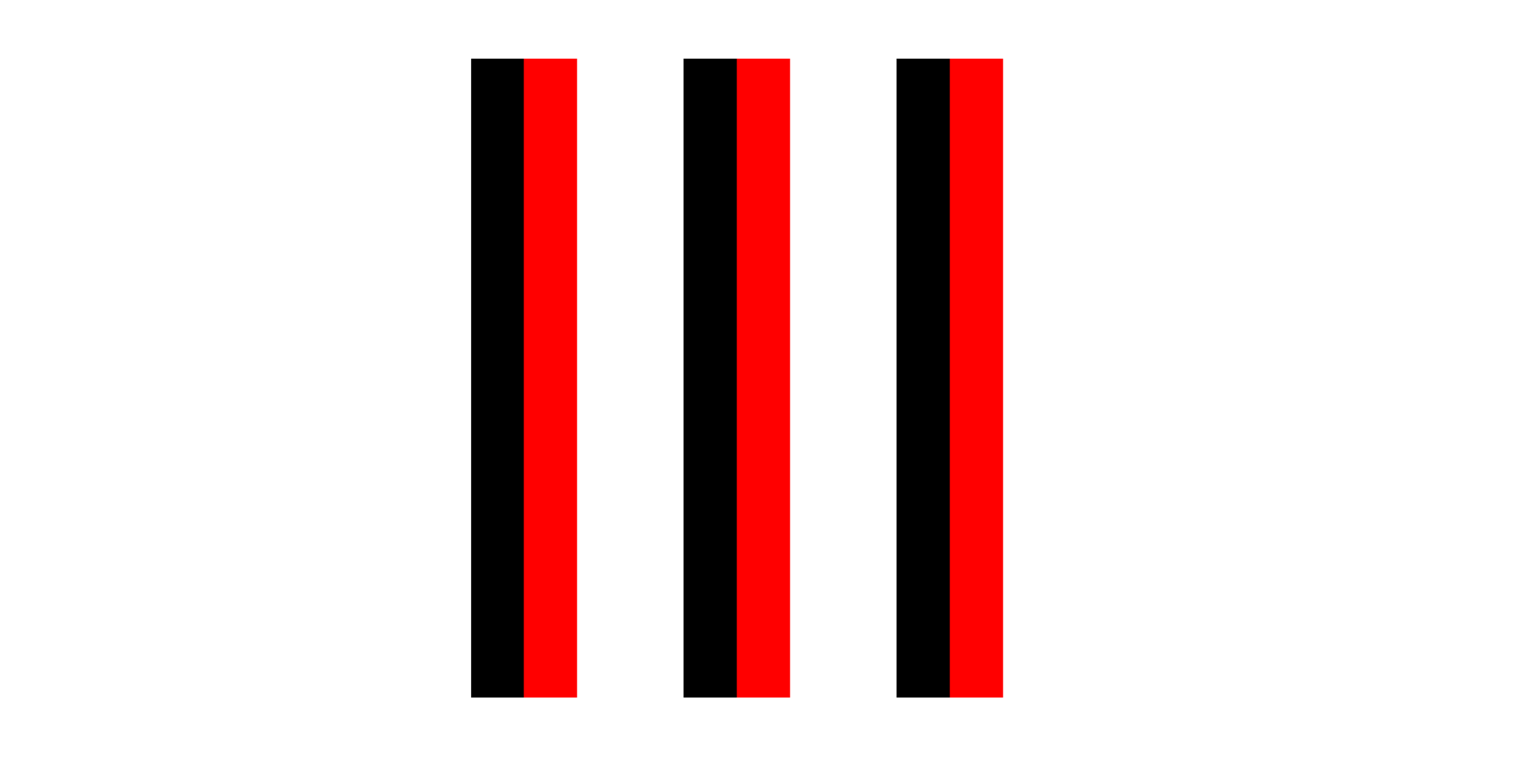} }}%
    \hspace{-1.5cm}
    \subfloat[\textit{021102120}]{{\includegraphics[width=5cm, clip]{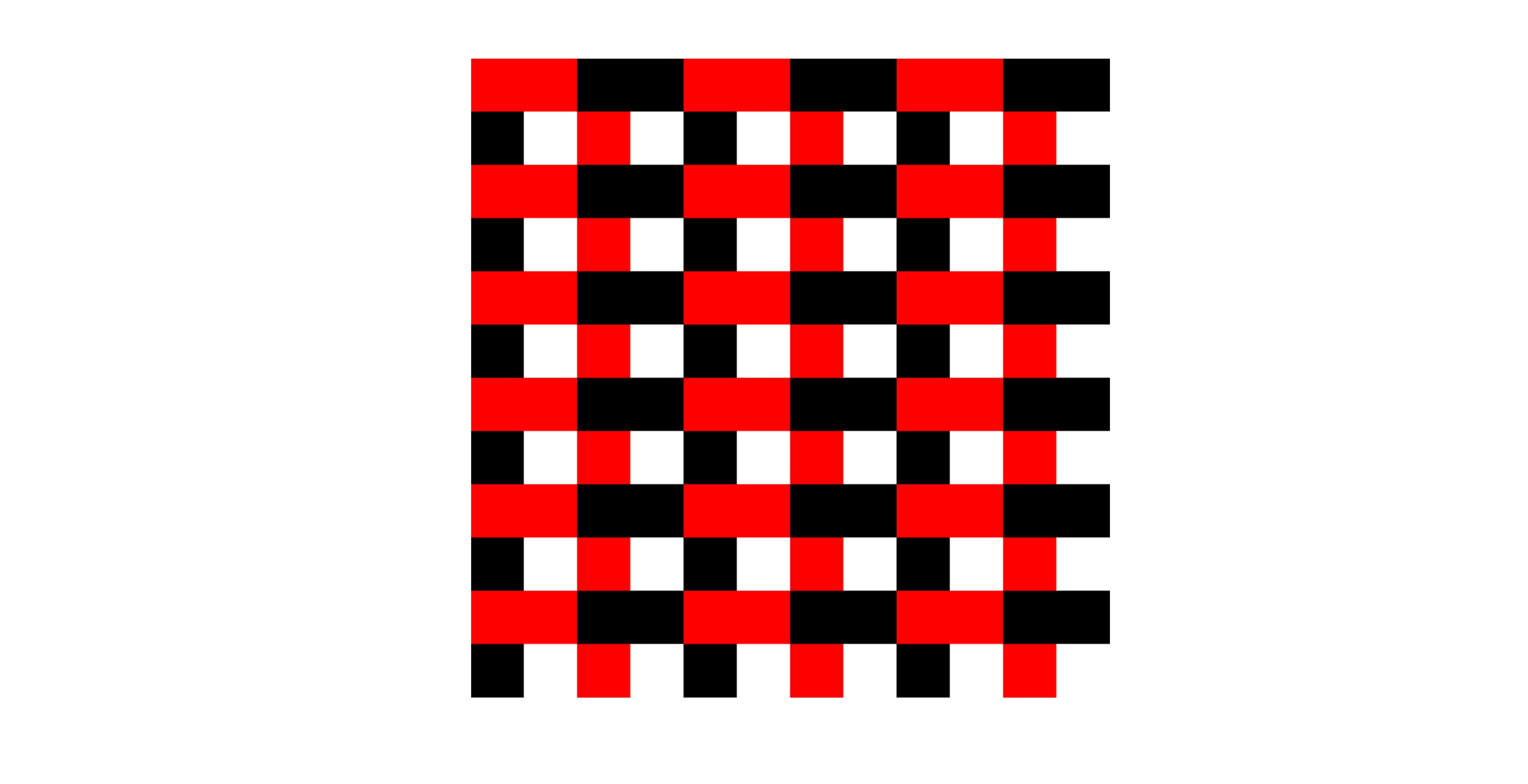} }}%
    \hspace{-1.5cm}
    \subfloat[\textit{100112122}]{{\includegraphics[width=5cm, clip]{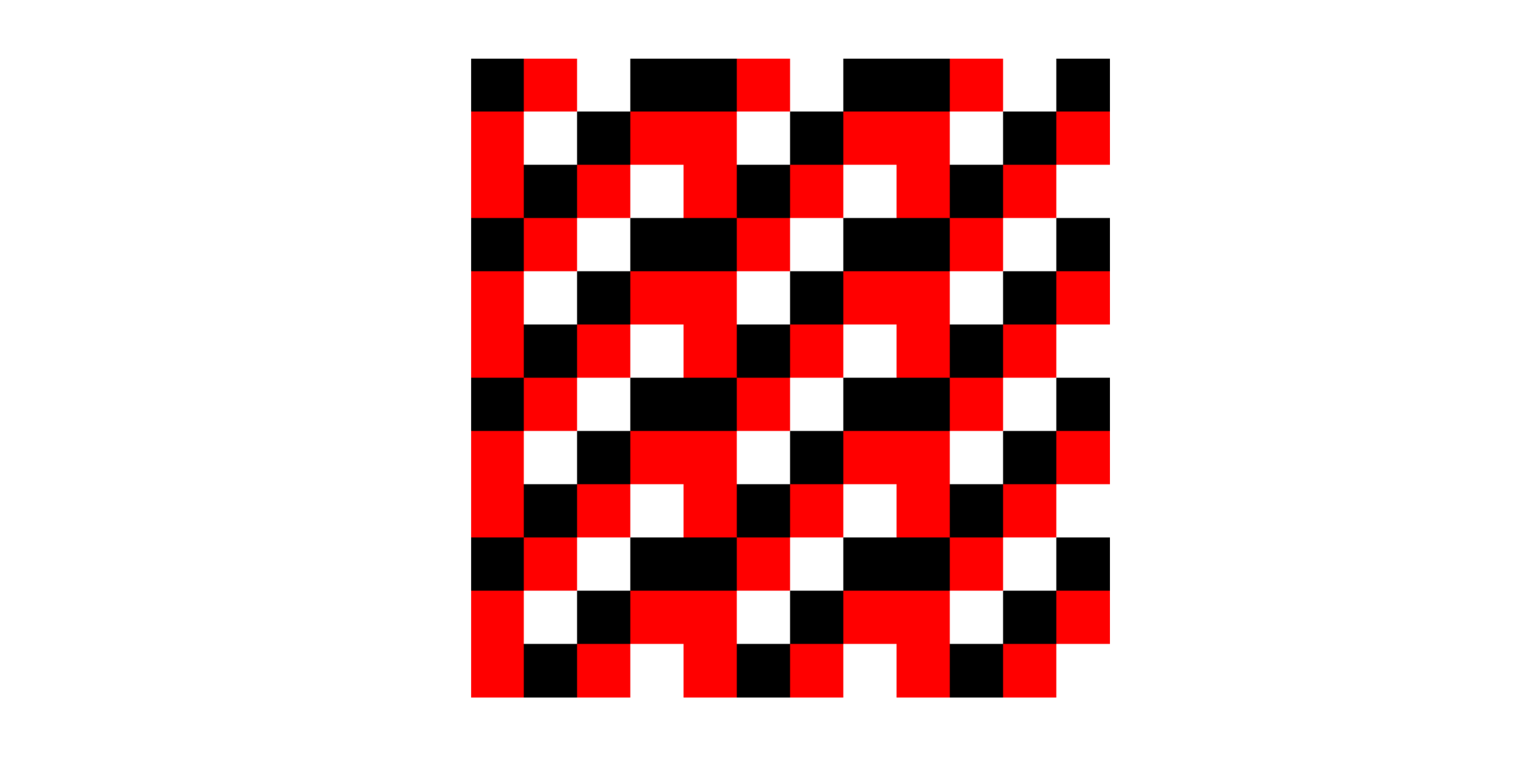} }}
    \hspace{-1.5cm}
    \subfloat[\textit{101201021}]{{\includegraphics[width=5cm, clip]{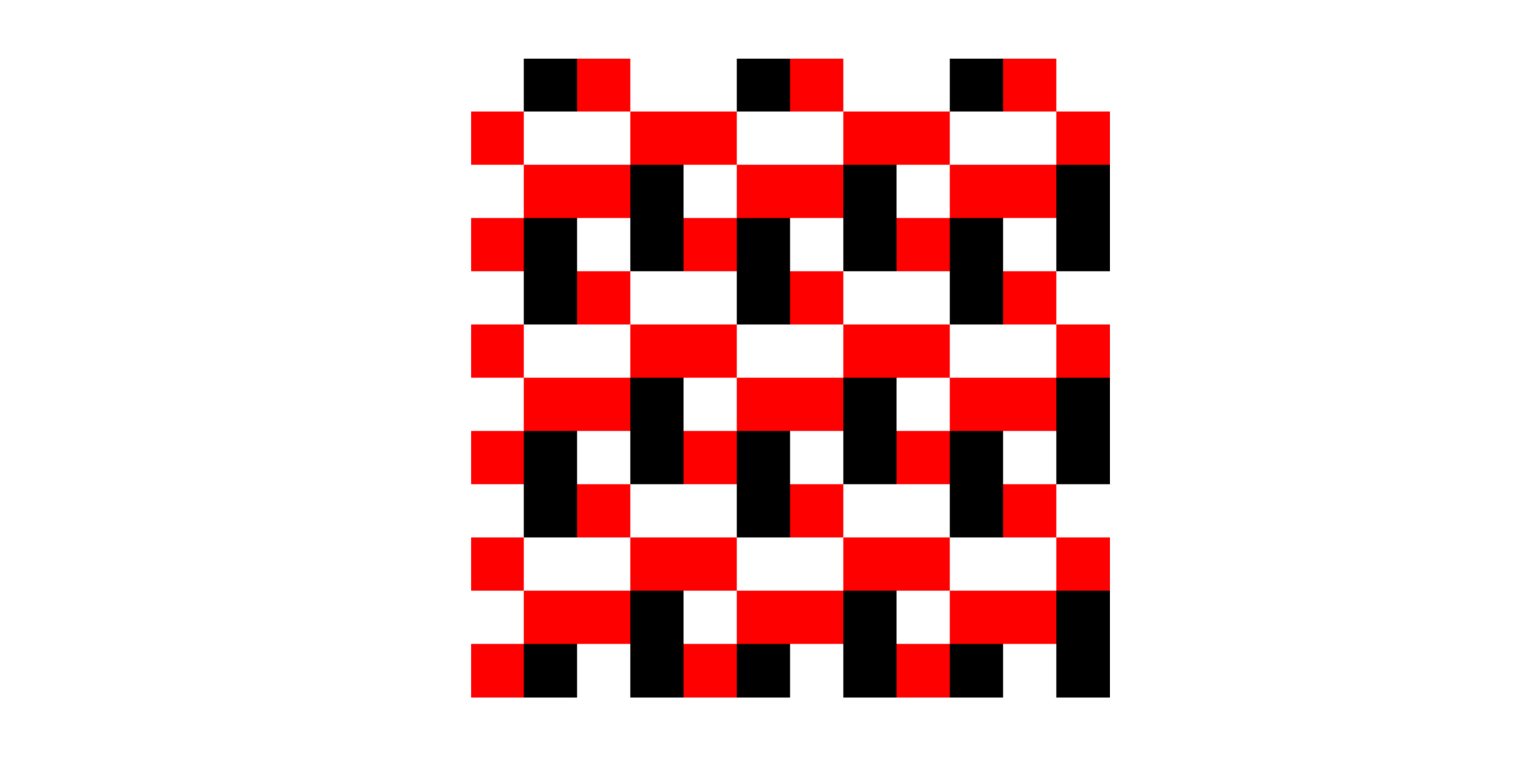} }}
    
    \caption{Pieces of PS for $\sigma = 4$ and 3-state rules, \textit{012200210}, \textit{021102120}, \textit{100112122} and \textit{101201021}, with temporal period $\tau = 1, 2, 3$ and $4$, respectively. (See the discussion before Corollary~\ref{cor:Y-prime}.)
    These temporal periods are the smallest in each case, as verified by Algorithm \ref{algorithm: configuration digraph} in Section \ref{section: configuration directed graph}. 
    Algorithm \ref{algorithm: PS} in Section \ref{section: label directed graph} shows that $\sigma = 4$ is not the minimal spatial period of PS given the corresponding temporal period $\tau = 1, 2$ and $3$ in the first three rules, while for the last rule $\sigma = 4$ is also the minimal spatial period of PS for temporal period $\tau = 4$.}%
    \label{figure: 4 examples}%
\end{figure}

\begin{cor} \label{corollary: min period}
The random variable $Y_{\sigma, n}$ converges weakly to a nontrivial distribution as $n \to \infty$.
\end{cor}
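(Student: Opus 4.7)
The plan is to realize the event $\{Y_{\sigma,n}\le \tau\}$ as a union of the events considered in Theorem~\ref{theorem: main 2}, and then invoke the theorem. Explicitly, for any fixed $\tau\ge 1$ and $\sigma\ge 1$,
\begin{equation*}
\{Y_{\sigma, n} \le \tau\} \;=\; \bigl\{\mathcal{P}_{\{1, 2, \ldots, \tau\}, \{\sigma\}, n} \neq \emptyset\bigr\},
\end{equation*}
since the minimal temporal period of some PS with spatial period $\sigma$ is at most $\tau$ if and only if there exists some PS whose temporal period lies in $\{1, \ldots, \tau\}$. Applying Theorem~\ref{theorem: main 2} to the finite set $\{1, \ldots, \tau\}\times\{\sigma\}$, I obtain the limiting distribution function
\begin{equation*}
F_\sigma(\tau) \;:=\; \lim_{n\to\infty} \mathbb{P}(Y_{\sigma, n} \le \tau) \;=\; 1 - \exp\!\left( -\sum_{t = 1}^{\tau} \lambda_{t, \sigma} \right).
\end{equation*}

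Next, I would verify that $F_\sigma$ is a bona fide, nontrivial distribution function on $\mathbb{N}$. Monotonicity in $\tau$ is immediate from formula \eqref{equation: lambda}, since adding terms only increases the exponent. For nontriviality, note that $\lambda_{1, \sigma} = 1/\sigma \in (0,\infty)$, so $F_\sigma(1) = 1 - e^{-1/\sigma} \in (0, 1)$, which rules out both degenerate point masses at $1$ and at $+\infty$. To confirm that $F_\sigma(\tau)\to 1$ as $\tau\to\infty$, I observe that $d = 1$ always contributes to the sum in \eqref{equation: lambda}, giving the trivial lower bound $\lambda_{t, \sigma} \ge 1/(t\sigma)$; hence $\sum_{t=1}^{\infty}\lambda_{t,\sigma} \ge \sigma^{-1}\sum_t 1/t = \infty$, and $F_\sigma(\tau)\uparrow 1$.

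Finally, since $Y_{\sigma, n}$ is $\mathbb{N}$-valued and its cumulative distribution function converges pointwise to $F_\sigma$ at every $\tau\in\mathbb{N}$, where $F_\sigma$ is continuous on $\mathbb{R}\setminus\mathbb{N}$ and $\lim_{\tau\to\infty} F_\sigma(\tau) = 1$, the standard Portmanteau criterion yields weak convergence $Y_{\sigma, n}\Rightarrow Y_\sigma$, where $Y_\sigma$ has distribution $F_\sigma$. No part of this argument is genuinely delicate; the only potential subtlety is the verification that $\sum_t \lambda_{t,\sigma}$ diverges (so that no mass escapes to $\infty$), and this is handled by the elementary bound above.
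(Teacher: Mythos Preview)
Your proof is correct and follows exactly the same route as the paper: identify $\{Y_{\sigma,n}\le \tau\}$ with $\{\mathcal{P}_{\{1,\ldots,\tau\},\{\sigma\},n}\neq\emptyset\}$ and apply Theorem~\ref{theorem: main 2}. In fact you are more thorough than the paper, which stops at the pointwise limit of the distribution function and does not explicitly verify (via the divergence of $\sum_t \lambda_{t,\sigma}$) that no mass escapes to infinity.
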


We now briefly discuss the relation between this corollary and the main results of \cite{gl2} and \cite{gl3}.
In \cite{gl2}, we consider a more general setting of CA rules with $r$ neighbors, that is, $\xi_t$ updates to $\xi_{t + 1}$ according to the rule $f: \mathbb{Z}_n^r \to \mathbb{Z}_n$, so that
$$\xi_{t + 1}(x) = f(\xi_{t}(x - r + 1), \dots, \xi_{t}(x)), \quad \text{for all }x \in \mathbb{Z}.$$
Fix a spatial period $\sigma$ and an $r$.
Let $X_{\sigma, n} = \max\{\tau: \mathcal{P}_{\tau, \sigma, n} \neq \emptyset\}$ be the largest temporal period of a PS with spatial period $\sigma$ of a randomly selected $r$-neighbor rule.
In the case when $\sigma \le r$, we prove that $X_{\sigma, n}/n^{\sigma/2}$ converges in distribution to a nontrivial limit, as $n \to \infty$. We also provide empirical evidence that 
the same result holds when $\sigma> r$, although in that 
case we do not have a rigorous proof even for $r=2$. At least 
for $r = \sigma = 2$, therefore, the shortest temporal period is stochastically bounded while the longest is on the order of $n$. 
Moreover, it is not hard to see that the maximum of the random variable $Y_{2, n}$ and $X_{2, n}$ are both $n^2 - n$. 
More generally, in \cite{gl3} and \cite{xiaochen}, 
we construct rules $f$ with $Y_{\sigma, n}(f) \ge C(\sigma)n^\sigma$. That is, the maximum of the random variable $Y_{\sigma, n}$ is of the same order as its upper bound $n^\sigma - \cO(n^{\sigma/2})$,  guaranteed by the pigeonhole principle.


In the next section, we collect 
our main tools: tiles of PS; 
circular shifts; 
oriented graphs induced by a rule; and the Chen-Stein method.
In Section 3, we discuss a class of tiles that plays a central role.
We prove the main results in Section 4 and conclude with a   discussion and several unsolved problems in Section 5.

\section{Preliminaries} 
\subsection{Tiles of a PS}



We recall that the spatial and temporal periods $\sigma$ and $\tau$ are assumed to be minimal, so a tile cannot be divided into smaller identical pieces.  We now take a closer look into properties of tiles.

If we choose an element in a tile $T$ to be placed at the position $(0, 0)$, $T$ may be expressed as a matrix $T = (a_{i,j})_{i = 0,\dots, \tau-1, j = 0, \dots ,\sigma - 1}$.
We always interpret the two subscripts 
modulo $\tau$ and $\sigma$. The matrix is determined up to
a space-time rotation, but note that two different rotations
cannot produce the same matrix due to the minimality of $\sigma$ 
and $\tau$.  
We say that $a_{i,j}$ is an element in $T$, and 
write $a_{i,j} \in T$, when we want to refer to the 
element of the matrix at the position $(i,j)$, and use the notation $\mathtt{row}_i$ and $\mathtt{col}_j$ to denote the $i$th row and $j$th column of a tile $T$, again after we fix $a_{0, 0}$. All the properties we now introduce are independent
of the chosen rotation (as they must be, to be 
 meaningful).

Let $T_1$ and $T_2$ be two tiles and $a_{i, j}$, $b_{k, m}$ be elements in $T_1$ and $T_2$, respectively. 
We say that $T_1$ and $T_2$ are \textbf{orthogonal}, and 
denote this property by $T_1 \perp T_2$,  if
 $\left(a_{i,j}, a_{i, j+1}\right) \neq \left(b_{k, m}, b_{k, m + 1}\right)$ for $i, j, k, m \in \mathbb{Z}_+$. It is important
 to observe that in this case 
 the two assignments   
 $a_{i,j}\underline{a_{i,j+1}}\mapsto a_{i+1,j+1}$ is and  
 $b_{k,m}\underline{b_{k,m+1}}\mapsto b_{k+1, m+1}$
 occur independently.

We say that $T_1$ and $T_2$ are \textbf{disjoint}, and 
denote this property by $T_1 \cap T_2=\emptyset$, if $a_{i, j} \neq b_{k, m}$, for $i, j, k, m \in \mathbb{Z}_+$. 
Clearly, every pair of disjoint tiles is orthogonal, but not vice versa.

Let $s(T) =  \#\{a_{i,j}: a_{i,j} \in T\}$ be the number of different states in the tile. Furthermore, let $p(T) = \#\{(a_{i,j}, a_{i,j+1}): a_{i,j}, a_{i,j+1} \in T\}$
be the \textbf{assignment number} of $T$; this is the number of values of the rule $f$ specified by $T$.
Clearly, $p(T) \ge s(T)$, so we define $\ell=\ell(T)= p(T) - s(T)$ to be the $\textbf{lag}$ of $T$. 
We record a few immediate properties of a tile in the following Lemma.

\begin{lem} \label{lemma: properties of tile}
Let $T= (a_{i,j})_{i = 0,\dots, \tau-1, j = 0, \dots ,\sigma - 1}$ be the tile of a PS with periods $\tau$ and $\sigma$. Then $T$ satisfies the following properties: 
\begin{enumerate}
\item Uniqueness of assignment: if $(a_{i, j}, a_{i, j + 1}) = (a_{k, m}, a_{k, m + 1})$, then $a_{i+1, j+1} = a_{k+1, m+1}$. 
\item Aperiodicity of rows: each row of $T$ cannot be divided into smaller identical pieces.
\end{enumerate}

\end{lem}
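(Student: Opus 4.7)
Both properties follow by directly unpacking the definitions of a PS and its tile. I do not anticipate a genuine obstacle; the only thing to be careful about is the indexing convention (indices read modulo $\tau$ and $\sigma$) and the circular interpretation of rows.

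For Property~1, I would translate the update rule $\xi_{t+1}(x)=f(\xi_t(x-1),\xi_t(x))$ into the indexing on $T$. With the matrix convention $(a_{i,j})$ in which $i$ is the temporal coordinate and $j$ the spatial coordinate, this becomes $a_{i+1,j+1}=f(a_{i,j},a_{i,j+1})$ for every $i$ and $j$, a relation already noted in the discussion preceding the definition of $\perp$. Since $f$ is a function, whenever two ordered pairs $(a_{i,j},a_{i,j+1})$ and $(a_{k,m},a_{k,m+1})$ coincide, the images $a_{i+1,j+1}$ and $a_{k+1,m+1}$ must be equal, which is exactly Property~1.

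For Property~2, I would argue by contradiction. Assume some row $i_0$ admits a proper period $\sigma'\mid\sigma$ with $\sigma'<\sigma$, that is, $a_{i_0,j}=a_{i_0,j+\sigma'}$ for all $j$. Applying Property~1 coordinate-wise then gives
\[
a_{i_0+1,j+1}=f(a_{i_0,j},a_{i_0,j+1})=f(a_{i_0,j+\sigma'},a_{i_0,j+1+\sigma'})=a_{i_0+1,j+1+\sigma'},
\]
so row $i_0+1$ inherits the same period $\sigma'$. Induction in the temporal direction, combined with the temporal periodicity $a_{i,j}=a_{i+\tau,j}$ used to cover rows with index below $i_0$, propagates the spatial period $\sigma'$ to every row of $T$. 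But then $\sigma'$ is itself a spatial period of the underlying PS, contradicting the minimality of $\sigma$. The only subtlety worth checking is that this induction respects the torus structure, which is automatic once all indices are read modulo $\tau$ and $\sigma$.
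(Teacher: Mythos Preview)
Your proposal is correct and follows exactly the approach the paper takes: Property~1 is immediate from $T$ being generated by a function $f$, and Property~2 is the propagation argument (via Property~1) showing a sub-period in one row forces a sub-period in every row, contradicting minimality of $\sigma$. The paper states this in a single sentence, whereas you have written out the details; there is no substantive difference.
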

\begin{proof}
Part 1 is clear since $T$ is generated by a CA rule. Part 2 follows from part 1 and the assumption that the spatial 
period of $T$ is minimal.
\end{proof}

By contrast, we remark that there {\it may\/} exist periodic columns in a tile of a PS. For example, note that 
the first column in Figure~\ref{figure: 4 examples}(d)
has period $2$ rather than $4=\tau$. 

\subsection{Circular shifts} \label{section: circular shifts}

In this section, we introduce circular shifts, operation on $Z_n^\sigma$ (or $Z_n^\tau$), the set of words of length
$\sigma$ (or $\tau$) from the alphabet $\mathbb{Z}_n$.
They will be useful in Section \ref{section: simple tiles}.

\begin{defi}\label{defination: circular shift}
Let $\mathbb{Z}_n^\sigma$ consist of all length-$\sigma$ words. 
A \textbf{circular shift} is a map $\pi: \mathbb{Z}_n^\sigma \to \mathbb{Z}_n^\sigma$, given by an $i \in \mathbb{Z}_+$ as follows: $\pi(a_0a_1\dots a_{\tau - 1}) =  a_i a_{i+1} \dots a_{i+\sigma-1}$, where the subscripts are modulo $\sigma$. 
The \textbf{order} of a circular shift $\pi$ is the smallest $k$ such that $\pi^k(A) = A$ for all $A \in \mathbb{Z}_n^\sigma$, and is denoted by $\ord(\pi)$.
Circular shifts on $\mathbb{Z}_n^{\tau}$ will also appear in the sequel and are defined in the same way.
\end{defi}


\begin{lem} \label{lemma: euler totient}
Let $\pi$ be a circular shift on $\mathbb{Z}_n^\sigma$ and let $A\in \mathbb{Z}_n^\sigma$ be an aperiodic length-$\sigma$ word from alphabet $\mathbb{Z}_n$. 
Then:
(1) $\ord(\pi)\bigm|\sigma$; and
(2) for any $d\bigm|\sigma$,
$$|\left\{B \in \mathbb{Z}_n^\sigma: A = \pi(B) \text{ for some }\pi \text{ with }\ord(\pi) = d \right\}|= \varphi(d).$$ 
\end{lem}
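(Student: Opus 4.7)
The plan is to identify the set of circular shifts on $\mathbb{Z}_n^\sigma$ with the cyclic group $\mathbb{Z}/\sigma\mathbb{Z}$: each circular shift $\pi$ is specified by an integer $i \in \{0, 1, \dots, \sigma - 1\}$, and composing two shifts corresponds to adding the integers modulo $\sigma$. Part (1) is then immediate, since under this identification $\pi^\sigma$ corresponds to shifting by $\sigma \equiv 0$, so $\pi^\sigma$ is the identity map on $\mathbb{Z}_n^\sigma$, and hence $\ord(\pi)$ divides $\sigma$.

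For Part (2), I would first count the shifts of order exactly $d$ for a fixed divisor $d \mid \sigma$. If $\pi$ corresponds to shift by $i$, then $\pi^k$ is the identity iff $\sigma \mid ki$, so $\ord(\pi) = \sigma/\gcd(i,\sigma)$. The condition $\ord(\pi) = d$ therefore translates to $\gcd(i, \sigma) = \sigma/d$, and writing $i = (\sigma/d)j$ with $j \in \{0, 1, \dots, d-1\}$, it becomes $\gcd(j, d) = 1$. Hence the number of shifts of order $d$ is exactly $\varphi(d)$.

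Next, I would show that the assignment $\pi \mapsto B := \pi^{-1}(A)$ is injective. For each circular shift $\pi$, the word $B$ satisfying $\pi(B) = A$ is uniquely determined. Suppose $\pi_1(B_1) = \pi_2(B_2) = A$ with $\pi_1 \ne \pi_2$ but $B_1 = B_2$; then $\pi_1 \pi_2^{-1}$ is a nontrivial circular shift that fixes $A$. But a shift by $i \not\equiv 0 \pmod{\sigma}$ fixes $A$ iff $A_{j+i} = A_j$ for all $j$, which forces $A$ to have period $\gcd(i, \sigma) < \sigma$, contradicting aperiodicity. So distinct shifts of order $d$ produce distinct preimages $B$, and we obtain exactly $\varphi(d)$ such words.

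The argument is essentially bookkeeping inside the cyclic group $\mathbb{Z}/\sigma\mathbb{Z}$; the only nontrivial ingredient is the injectivity step, which is the one place where the aperiodicity hypothesis on $A$ is used, and that step is a direct consequence of the definition of aperiodicity. So I do not anticipate any serious obstacle.
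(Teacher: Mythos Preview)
Your proposal is correct and follows essentially the same approach as the paper: both identify the circular shifts with the cyclic group $\mathbb{Z}/\sigma\mathbb{Z}$, deduce (1) from Lagrange, count the $\varphi(d)$ elements of order $d$ for (2), and use aperiodicity of $A$ to conclude that distinct shifts give distinct preimages. The paper's proof is simply terser, asserting the last two steps without the explicit computations you supply.
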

\begin{proof}
Note that the $\sigma$ circular shifts form a cyclic group of order $\sigma$.
Moreover, $\ord(\pi)$ of a circular shift is its order in the group, thus (1) follows. To prove (2), observe that
the circular shifts of order $d$ 
generate a cyclic subgroup and the number 
of them is $\varphi(d)$.
As $A$ is aperiodic, the cardinality in the claim is the same.
\end{proof}

We say that two words  $A$ and $B$ of length $\sigma$ are \textbf{equal up to a circular shift} if $B = \pi(A)$ for some circular shift $\pi$. For example, 
words  $0123$ and $2301$ are not equal, but are equal up to a circular shift.

\subsection{Directed graph on configurations} \label{section: configuration directed graph}

Connections between directed graphs on periodic 
configurations and cycles are well-established 
\cite{martin1984algebraic, wolfram2002new,
kim2009state, xu2009dynamical}, as they are 
useful for analysis of PS with a fixed spatial period.


\begin{defi}
Let $A = a_0\dots a_{\sigma-1}$ and $B=b_0\dots b_{\sigma-1}$ be two words from alphabet $\mathbb{Z}_n$.
We say that $A$ \textbf{down-extends to} $B$, if $f(a_{i}, a_{i + 1}) = b_{i + 1}$, for all $i \ge 0$, where (as usual) the indices are modulo $\sigma$.
\end{defi}

If $A$ down-extends to $B$, then $\pi(A)$ also down-extends to $\pi(B)$, for any circular shift  $\pi$ on $\mathbb{Z}_n^\sigma$. 
Therefore, we can define, for a fixed $\sigma$, the \textbf{configuration digraph} on 
equivalence classes of words equal up to circular shifts, which 
has an arc from $A$ to $B$ if $A$ down-extends to $B$ (where we
identify the equivalence class with any of its representatives). 
See Figure \ref{figure: configuration digraph} for the configuration digraph of the 3-state rule \textit{021102022}.
For instance, there is an arc from 122 to 210 as 
$1\underline{2} \mapsto 1$, 
$2\underline{2} \mapsto 0$ and 
$2\underline{1} \mapsto 2$. 
The following algorithm and self-evident proposition determine 
the PS  in Figure~\ref{figure: PS and tile} from
the length-2 cycle $120\leftrightarrow 211$  
in Figure~\ref{figure: configuration digraph}. 

\begin{alg}\label{algorithm: configuration digraph}
Input: Configuration digraph $D_{\sigma, f}$ of $f$ and spatial  period $\sigma$.\\
Step 1: Find all the directed cycles in $D_{\sigma, f}$.\\
Step 2: For each cycle $A_0 \to A_1 \to \cdots \to A_{\tau - 1}
\to A_0$, form the tile $T$ by 
placing configurations $A_0, A_1,\ldots,  A_{\tau-1}$ on successive rows. \\
Step 3: If the spatial period of $T$ is minimal, output $T$.
\end{alg}

\begin{prop}\label{proposition: configuration digraph gives all PS }
All PS of spatial period $\sigma$ of $f$ are obtained by Algorithm~\ref{algorithm: configuration digraph}.
\end{prop}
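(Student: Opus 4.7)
The plan is to prove both inclusions implicit in the statement: every PS of spatial period $\sigma$ is among the outputs, and (for completeness) every output is a genuine PS of spatial period $\sigma$. The proof is essentially unpacking definitions, so I would keep it brief.

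For the main (forward) direction, let $T$ be any PS with spatial period $\sigma$ and temporal period $\tau$, and let $A_0, A_1, \dots, A_{\tau-1}$ denote the rows of $T$ under some fixed choice of origin. By the defining update rule $\xi_{t+1}(x) = f(\xi_t(x-1), \xi_t(x))$ together with $\sigma$-periodicity in space, each $A_i$ down-extends to $A_{i+1 \bmod \tau}$ in the sense of Definition~2. Therefore the sequence of equivalence classes $[A_0] \to [A_1] \to \cdots \to [A_{\tau-1}] \to [A_0]$ is a closed walk in the configuration digraph $D_{\sigma,f}$, and in particular contains a directed cycle which Step~1 of the algorithm enumerates. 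Lifting this cycle to representatives via the down-extension relation and stacking them as in Step~2 recovers exactly the rows of $T$ up to a space-time rotation, which is the identification built into the definition of a tile. Since $T$ has spatial period $\sigma$ by assumption, the minimality check of Step~3 passes, so $T$ is output.

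For the reverse direction, any cycle $A_0 \to \cdots \to A_{\tau-1} \to A_0$ produced by the algorithm, by the definition of the arcs, gives rows that propagate correctly under $f$ and close up temporally. The resulting space-time configuration is therefore periodic with some temporal period dividing $\tau$ and spatial period dividing $\sigma$; Step~3 then ensures $\sigma$ is minimal, so the output is indeed a valid tile (note the proposition claims all PS are obtained, not that every output has minimal temporal period; if desired this could be enforced with an additional check).

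The main subtlety — really the only one — is that the digraph is defined on equivalence classes under circular shift, while temporally the rows of $T$ may revisit the same equivalence class (for instance when some column of $T$ is itself periodic with period smaller than $\tau$, as pointed out right after Lemma~\ref{lemma: properties of tile}). One must therefore be careful that a ``cycle'' in $D_{\sigma,f}$ is interpreted as a closed walk of representatives chosen consistently along the down-extension relation, rather than a simple cycle in the underlying equivalence-class multidigraph. Once this is clarified, the correspondence between PS with spatial period $\sigma$ and such closed walks is a bijection up to space-time rotation, and the proposition follows immediately.
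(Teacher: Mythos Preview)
The paper gives no proof of this proposition at all: it is introduced in the text as ``the following algorithm and self-evident proposition,'' and is simply stated. Your write-up is therefore strictly more detailed than the paper's, and its content is correct.

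You also put your finger on the one genuine subtlety the paper glosses over: because $D_{\sigma,f}$ is defined on circular-shift equivalence classes and has out-degree $1$, the rows of a PS with temporal period $\tau$ may trace out a simple cycle of length $d$ strictly dividing $\tau$ (this happens exactly when some $A_d$ is a nontrivial circular shift of $A_0$), and the correct reading of Step~2 is to pick a representative $A_0$ and down-extend until one returns to $A_0$ itself, not merely to $[A_0]$. Two small points of polish. First, in your forward paragraph you say the closed walk ``contains a directed cycle'' and then speak of ``lifting this cycle,'' which momentarily suggests lifting only the length-$d$ simple cycle; your final paragraph fixes this, but it would read better to adopt the closed-walk-of-representatives interpretation from the start. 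Second, your parenthetical worry in the reverse direction is unnecessary: with that same interpretation, the number of stacked rows is by construction the least $\tau$ with $A_\tau=A_0$, which is exactly the minimal temporal period, so no extra check is needed.
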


We remark that Step 3 in 
Algorithm~\ref{algorithm: configuration digraph} is necessary, as, for instance, the cycle $000\leftrightarrow 222$ in 
Figure~\ref{figure: configuration digraph} results in 
a PS of spatial period 1 instead of 3. In the same vein, 
the periods of configurations are non-increasing, and may decrease, along any directed path on the configuration digraph. For example, in Figure~\ref{figure: configuration digraph}, 
the configuration 100 down-extends to 222, thus the period is reduced from 3 to 1 and then remains 1. These period reductions
play a crucial role in our companion paper \cite{gl2}.  

\begin{figure}[!h]
\centering
\begin{tikzpicture}
[
            > = stealth, 
            shorten > = 1pt, 
            auto,
            node distance = 3cm, 
            semithick 
        ]

        \tikzstyle{every state}=[
            draw = black,
            thick,
            fill = white,
            minimum size = 4mm
        ]
        
        \node (1) at (0, 0) {$122$};
        \node (2) at (2, 0) {$210$};
        \node (3) at (4, 0) {$022$};
        \node (11) at (6, 0) {$101$};
        \node (4) at (6, -1) {$100$};
        \node (5) at (4, -1) {$222$};
        \node (6) at (2, -1) {$000$};
        \node (7) at (0, -1) {$111$};
        
        \node (8) at (0, -2) {$002$};
        \node (9) at (2, -2) {$120$};
        \node (10) at (4, -2) {$211$};
        
        \path[->] (1) edge node {} (2);
        \path[->] (2) edge node {} (3);
        \path[->] (3) edge node {} (4);
        \path[->] (4) edge node {} (5);
        \path[->] (5) edge [bend right=10]  node {} (6);
        \path[->] (6) edge [bend right=10]  node {} (5);
        \path[->] (7) edge node {} (6);
        \path[->] (11) edge node {} (3);
        
        \path[->] (8) edge node {} (9);
        \path[->] (9) edge [bend right=10] node {} (10);
        \path[->] (10) edge [bend right=10] node {} (9);
        
    \end{tikzpicture}
\caption{Configuration digraph  of the 3-state rule \textit{021102022} and spatial period $\sigma=3$.  }
\label{figure: configuration digraph}
\end{figure}
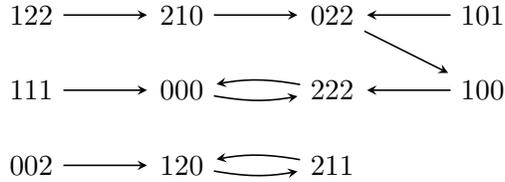

\subsection{Directed graph on labels} \label{section: label directed graph}

In this subsection, we fix the temporal period $\tau$, instead of the spatial period $\sigma$, and obtain another digraph induced by the rule. The construction below is an adaption of label trees from \cite{gravner2012robust}.
We call such a graph \textbf{label digraph}.

\begin{defi}
Let $A = a_0\dots a_{\tau-1}$ and $B=b_0\dots b_{\tau-1}$ be two words from alphabet $\mathbb{Z}_n$, which we 
call \textbf{labels} of length $\tau$.
(While it is best to view them as vertical columns, we write them horizontally for reasons of space, as in \cite{gravner2012robust}.)
We say that $A$ \textbf{right-extends to} $B$ if $f(a_{i}, b_{i}) = b_{i+1}$, for all $i \in \mathbb{Z}_+$, where (as usual) the indices are modulo $\tau$.
We form the \textbf{label digraph} associated with a given $\tau$ by forming an arc from a label $A$ to a label $B$ if $A$ right-extends to $B$.
\end{defi}

A label $A = a_0\dots a_{\tau - 1}$ right-extends to $B$ if and only if we preserve the temporal periodicity from a column $A$ to the column $B$ to its right.
This fact is the basis for the Algorithm \ref{algorithm: PS} below, which gives all the PS with temporal period $\tau$.
The label digraph of same rule as in Figure~\ref{figure: configuration digraph} and temporal period $\tau=2$ is presented in  Figure \ref{figure: label digraph}.
For example, we have the arc from label 12 to 10 as 
$1\underline{1} \mapsto 0$, $2\underline{0} \mapsto 1$. 
Either of the two 3-cycles in the digraph generates the PS in Figure \ref{figure: PS and tile}. 

\begin{alg}\label{algorithm: PS}
Input: Label digraph $D_{\tau, f}$ of $f$ with period $\tau$.\\
Step 1: Find all the directed cycles in $D_{\tau, f}$.\\
Step 2: For each cycle $A_0 \to A_1 \to \cdots \to A_{\sigma - 1}\to A_0$, form the tile $T$   
by placing configurations $A_0, A_1,\ldots,  A_{\sigma-1}$ on successive columns.\\
Step 3: If both spatial and temporal periods of $T$  are minimal, then output $T$.
\end{alg}

\begin{prop}
All PS of temporal period $\tau$ of $f$ can be obtained by the Algorithm \ref{algorithm: PS}.
\end{prop}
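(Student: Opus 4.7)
The plan is to mirror the argument that justified Proposition \ref{proposition: configuration digraph gives all PS }, but with the roles of rows and columns swapped and the right-extension relation replacing the down-extension relation.

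Let $T = (a_{i,j})_{0\le i\le \tau-1,\,0\le j\le \sigma-1}$ be an arbitrary PS of $f$ with minimal temporal period $\tau$ and some minimal spatial period $\sigma$, where we interpret the indices modulo $\tau$ and $\sigma$ respectively. For each $j$, let $C_j = a_{0,j}a_{1,j}\cdots a_{\tau-1,j}$ be the $j$th column, viewed as a label of length $\tau$. The first step would be to check that $C_j$ right-extends to $C_{j+1}$ for every $j$. For $0\le i\le \tau-2$, the CA update rule directly gives $f(a_{i,j}, a_{i,j+1}) = a_{i+1,j+1}$, which is exactly the defining condition at index $i$. For the wrap-around index $i=\tau-1$, I would use that the temporal period of $T$ is $\tau$ (hence $a_{\tau, j+1} = a_{0, j+1}$) to conclude $f(a_{\tau-1,j}, a_{\tau-1,j+1}) = a_{\tau,j+1} = a_{0,j+1}$, which is the required condition modulo $\tau$.

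Next, I would argue that $C_0, C_1, \dots, C_{\sigma-1}$ are pairwise distinct. If $C_j = C_{j'}$ for some $0\le j<j'\le \sigma-1$, then by the uniqueness of right-extensions for a deterministic rule $f$, the entire tile would be invariant under the spatial shift by $j'-j$, contradicting the minimality of $\sigma$. Consequently, $C_0\to C_1\to\cdots\to C_{\sigma-1}\to C_0$ is a directed cycle of length exactly $\sigma$ in the label digraph $D_{\tau,f}$, and so is detected in Step 1 of Algorithm \ref{algorithm: PS}. Placing the labels $C_0, \dots, C_{\sigma-1}$ in successive columns as in Step 2 reconstructs precisely $T$, which by hypothesis has minimal spatial period $\sigma$ and minimal temporal period $\tau$, so Step 3 outputs $T$.

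I do not expect any serious obstacle here: the only subtle point is the wrap-around verification at $i=\tau-1$, which rests on the temporal periodicity of the columns, and the distinctness of the labels $C_j$, which uses the minimality of $\sigma$ together with the deterministic nature of $f$. The converse direction (that every output of the algorithm is a genuine PS of temporal period $\tau$) is built into Step 3 by definition, so it requires no separate argument.
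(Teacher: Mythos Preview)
The paper gives no proof for this proposition (it is treated as self-evident, like Proposition~\ref{proposition: configuration digraph gives all PS }), so your write-up is more detailed than the original. The overall shape of your argument is correct, but there is a genuine error in the distinctness step.

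You write that ``by the uniqueness of right-extensions for a deterministic rule $f$'' two equal columns would force spatial periodicity. This is false: unlike down-extension (which is a function, since $\xi_{t+1}$ is determined by $\xi_t$), right-extension is \emph{not} unique. Given a column $A=a_0\cdots a_{\tau-1}$, a right-extension $B$ is determined by $A$ together with the choice of $b_0$, so a label can right-extend to several labels; the paper points this out explicitly after the proposition (``the out-degrees in Figure~\ref{figure: label digraph} are between $0$ and $3$''). Hence $C_j=C_{j'}$ does not imply $C_{j+1}=C_{j'+1}$, and columns of a PS tile can repeat. A concrete counterexample: with $\tau=1$ and $\sigma=4$, the single row $0\,1\,0\,2$ is a valid PS tile for any rule with $f(2,0)=0$, $f(0,1)=1$, $f(1,0)=0$, $f(0,2)=2$, and here $C_0=C_2=0$.

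The fix is simply to drop the distinctness claim: your first paragraph already shows that $C_0\to C_1\to\cdots\to C_{\sigma-1}\to C_0$ is a closed directed walk in $D_{\tau,f}$, and Step~1 of Algorithm~\ref{algorithm: PS} should be read (as the paper evidently intends) to include such closed walks, not only simple cycles. With that reading, Steps~2--3 reproduce $T$ exactly as you describe, and the argument goes through.
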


Again, Step 3 is necessary due to the same reason as Section \ref{section: configuration directed graph}. However, note 
the differences between the two graphs: the out-degrees
in Figure~\ref{figure: label digraph} are between $0$ and $3$, 
and the temporal periods are not necessarily non-decreasing along a directed path. For example, 00 right-extends to 02.
We also note that lifting the label
digraph to one on equivalence classes, although possible, makes 
cycles more obscure and is thus less convenient.  

\begin{figure}[!h]
\centering
\begin{tikzpicture}
[
            > = stealth, 
            shorten > = 1pt, 
            auto,
            node distance = 3cm, 
            semithick 
        ]

        \tikzstyle{every state}=[
            draw = black,
            thick,
            fill = white,
            minimum size = 4mm
        ]
        
        \node (1) at (0, 0) {$02$};
        \node (2) at (2, 0) {$12$};
        \node (3) at (4, 0) {$21$};
        \node (4) at (6, 0) {$20$};
        \node (5) at (3, 1) {$10$};
        \node (6) at (3, -1) {$01$};
        \node (7) at (3, -2) {$00$};
        
        \node (8) at (4, -2) {$11$};
        \node (9) at (5, -2) {$22$};

		\path[->] (1) edge [loop above] node {} (1);
		\path[->] (4) edge [loop above] node {} (4);
        \path[->] (2) edge node {} (1);
        \path[->] (2) edge[bend right=10] node {} (3);
        \path[->] (2) edge node {} (5);
        
        \path[->] (3) edge[bend right=10] node {} (2);
        \path[->] (3) edge node {} (4);
        \path[->] (3) edge node {} (6);
        
        \path[->] (4) edge [loop above] node {} (4);
        
        \path[->] (5) edge node {} (1);
        \path[->] (5) edge node {} (3);
        
        \path[->] (6) edge node {} (2);
        \path[->] (6) edge node {} (4);
        
        \path[->] (7) edge node {} (1);
        \path[->] (7) edge node {} (4);

    \end{tikzpicture}
\caption{Label digraph of the 3-state rule \textit{021102022} and temporal period $\tau=2$.}
\label{figure: label digraph}
\end{figure}
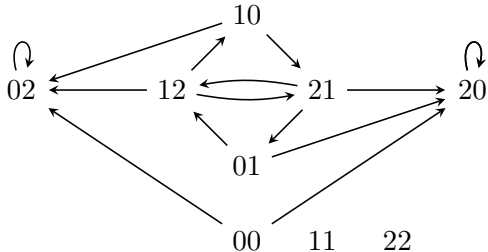

\subsection{Chen-Stein method for Poisson approximation}
The main tool we use to prove Poisson convergence is the Chen-Stein method \cite{barbour1992poisson}. 
We denote by $\Poi(\lambda)$ a Poisson random variable with expectation $\lambda$, and by $d_{\text{TV}}$ the total variation distance. 
We need the following setting for our purposes. Let 
$I_i$, for $i\in \Gamma$ be indicators of a finite family of events, which is indexed by $\Gamma$,
$p_i = \mathbb{E}(I_i)$, $W = \sum_{i\in \Gamma}I_i$, $\lambda = \sum_{i\in \Gamma} p_i = \mathbb{E}W$, and
$\Gamma_i = \{j\in \Gamma: j \neq i,\, I_i\text{ and } I_j \text{ are not independent}\}$. We quote Theorem 4.7 from \cite{ross2011fundamentals}.
\begin{lem} \label{lemma: chen-stein lemma}
We have
$$d_{\text{TV}}(W, \Poi(\lambda)) \le \min(1, \lambda^{-1}) \left[ \sum_{i\in \Gamma} p_i^2 + \sum_{i\in \Gamma, j\in \Gamma_i} \left(p_ip_j + \mathbb{E}\left(I_iI_j\right)\right)\right].$$
\end{lem}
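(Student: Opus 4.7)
The plan is to prove this via the standard Stein's method for Poisson approximation. The first step is to set up the Stein equation: for each subset $A \subset \Z_+$, one seeks a bounded function $g_A : \Z_+ \to \R$ satisfying
$$\lambda g_A(k+1) - k g_A(k) = \mathbf{1}_A(k) - \mathbb{P}(\Poi(\lambda) \in A),$$
which after solving the recursion explicitly yields the identity
$$d_{\text{TV}}(W, \Poi(\lambda)) = \sup_A \left| \mathbb{E}\bigl[\lambda g_A(W+1) - W g_A(W)\bigr] \right|.$$
A key analytic input, proved by a careful estimate on tail sums of Poisson weights, is the a priori bound $\|\Delta g_A\|_\infty \le \min(1, 1/\lambda)$ on the forward difference $\Delta g_A(k) = g_A(k+1) - g_A(k)$, uniformly in $A$. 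This Poisson-weight estimate is the main analytic obstacle; once granted, the remainder is an accounting argument driven by the dependency structure.

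The second step exploits that structure. For each $i \in \Gamma$, decompose $W = I_i + Z_i + W_i'$, where $Z_i = \sum_{j \in \Gamma_i} I_j$ and $W_i' = \sum_{j \notin \Gamma_i \cup \{i\}} I_j$ is independent of $I_i$ by the definition of $\Gamma_i$. Using $\lambda = \sum_i p_i$ and the independence relation $p_i \mathbb{E}[g(W_i'+1)] = \mathbb{E}[I_i g(W_i'+1)]$, rewrite
$$\mathbb{E}\bigl[\lambda g(W+1) - W g(W)\bigr] = \sum_{i} p_i\, \mathbb{E}\bigl[g(W+1) - g(W_i'+1)\bigr] - \sum_{i} \mathbb{E}\bigl[I_i\bigl(g(W) - g(W_i'+1)\bigr)\bigr].$$

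The third step applies $|g(a) - g(b)| \le \|\Delta g\|_\infty |a-b|$ term by term. In the first sum, $|(W+1) - (W_i'+1)| = I_i + Z_i$, giving a contribution of $\|\Delta g\|_\infty \sum_i p_i(p_i + \sum_{j \in \Gamma_i} p_j)$. In the second sum, the factor $I_i$ forces $|W - (W_i'+1)| = Z_i$ on the event that contributes, giving $\|\Delta g\|_\infty \sum_i \sum_{j \in \Gamma_i} \mathbb{E}[I_i I_j]$. Adding these and inserting $\|\Delta g\|_\infty \le \min(1, 1/\lambda)$ produces exactly the stated bound. As already noted, the only nontrivial step is the Stein-solution estimate; the dependency bookkeeping is purely mechanical once the decomposition into $I_i$, $Z_i$, $W_i'$ is in place.
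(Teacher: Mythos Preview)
Your outline is correct and follows the standard local-coupling proof of the Chen--Stein Poisson bound (indeed, essentially the argument in \cite{ross2011fundamentals}). Note, however, that the paper does not give its own proof of this lemma at all: it simply quotes the result as Theorem~4.7 of \cite{ross2011fundamentals}. So there is no ``paper's approach'' to compare against---you have supplied a proof where the authors opted for a citation, and your sketch matches the cited source.
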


In our applications of the above lemma, all deterministic and random quantities depend on the number $n$ of states, which we make explicit by the subscripts.
In our setting, we prove that $d_{\text{TV}}(W_n, \Poi(\lambda_n)) = \cO(1/n)$ and that $\lambda_n \to \lambda$ as $n \to \infty$, for an explicitly given $\lambda$, 
which implies that $W_n$ converges to $\Poi(\lambda)$ in distribution. See Theorem \ref{theorem: main 1} and Theorem \ref{theorem: main 2}.


\section{Simple tiles} \label{section: simple tiles}
We call a tile $T$ \textbf{simple} if its lag vanishes: 
$\ell(T) = p(T) - s(T) = 0$. 
It turns out that in $\mathbb{P}(\mathcal{P}_{\tau, \sigma, n} \neq \emptyset)$, the probability of existence of PS with simple tiles provides the dominant terms, thus this class of tiles is of 
central importance. For example, consider the tiles
$$T_1 = 
\begin{matrix}
0 & 1 & 2 & 3\\
2 & 3 & 0 & 1\\
\end{matrix},\qquad
T_2 = 
\begin{matrix}
0 & 1 & 2 & 1\\
2 & 1 & 0 & 1\\
\end{matrix}.
$$
Then $T_1$ is simple, as $s(T_1) = p(T_1) = 4$, 
but $T_2$ is not, as $s(T_2) = 3$ and $p(T_2) = 4$. Naturally, 
we call a PS simple if its tile is simple.

We denote by $\mathcal{P}_{\tau, \sigma, n}^{(\ell)}$ as the set of PS whose tile $T$ has lag $\ell$.
Thus the set of simple PS is $\mathcal{P}_{\tau, \sigma, n}^{(0)}$.
The following lemma addresses ramifications of repeated 
states in simple tiles.

\begin{lem}\label{lemma:simple-tiles-repeat}   
Assume $T = (a_{i,j})_{ i = 0,\dots, \tau-1, j = 0, \dots ,\sigma - 1}$ is a simple tile. Then
\begin{enumerate}
\item the states on each row of $T$ are distinct;
\item if two rows of $T$ share a state, then they are circular shifts of each other;
\item the states on each column of $T$ are distinct; and
\item if two columns of $T$ share a state, then they are circular shifts of each other.
\end{enumerate}
\end{lem}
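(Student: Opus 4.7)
The plan is to isolate one \emph{horizontal propagation} property that simple tiles enjoy and then derive all four parts from it.

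Because $p(T)\geq s(T)$ is always witnessed by sending each state $a$ of $T$ to any pair whose first coordinate is $a$, the hypothesis $\ell(T)=0$ forces each state of $T$ to have a \emph{unique} right neighbor: there is a well-defined map $g$ on the states in $T$ with $a_{i,j+1}=g(a_{i,j})$ at every position in the tile. Iterating $g$ yields the key identity
\begin{equation*}
a_{i,j}=a_{k,m}\ \Longrightarrow\ a_{i,j+t}=a_{k,m+t}\quad\text{for every }t\geq 0.
\end{equation*}

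Parts 1 and 2 are immediate applications of this identity within a single row. If $\mathtt{row}_i$ had $a_{i,j}=a_{i,j'}$ with $0<j'-j<\sigma$, iterating $g$ would make $\mathtt{row}_i$ periodic with period $j'-j$, contradicting Lemma~\ref{lemma: properties of tile}(2); this is part 1. For part 2, if $a_{i,j}=a_{k,m}$, substituting $l=j+t$ rewrites the identity as $a_{i,l}=a_{k,l+(m-j)}$ for every $l$, so $\mathtt{row}_k$ is the circular shift of $\mathtt{row}_i$ by $m-j$.

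For the column statements I would lift the horizontal propagation across rows using the CA recursion:
\begin{equation*}
a_{i+1,j+t+1}=f(a_{i,j+t},a_{i,j+t+1})=f(a_{k,m+t},a_{k,m+t+1})=a_{k+1,m+t+1}.
\end{equation*}
This transfers the identity from rows $i,k$ to rows $i+1,k+1$, and induction on $s$ gives $a_{i+s,j+t}=a_{k+s,m+t}$ for all $s,t\geq 0$. Setting $t=0$ yields part 4: with $r=k-i$, we obtain $a_{i+s,j}=a_{i+s+r,m}$ for every $s$, so $\mathtt{col}_m$ is the circular shift of $\mathtt{col}_j$ by $r$ on the discrete torus. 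Part 3 is the case $j=m$ of the same identity: it becomes $\mathtt{row}_{i+s}=\mathtt{row}_{k+s}$ for every $s$, which together with $\tau$-periodicity of the row sequence forces the temporal period of $T$ to divide $\gcd(\tau,k-i)<\tau$, contradicting the minimality of $\tau$.

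The one place that requires care is the horizontal-to-vertical bootstrap: all indices must be read on the discrete torus (modulo $\sigma$ and $\tau$ respectively), and one has to check that the induced shift is nontrivial in the direction claimed. Beyond that bookkeeping, all four parts fall out mechanically from the key identity and the CA recurrence.
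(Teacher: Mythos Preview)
Your proof is correct and follows essentially the same approach as the paper: both extract from $p(T)=s(T)$ the fact that each state has a unique right neighbor, use this to propagate equalities horizontally (giving parts 1 and 2), and then invoke the CA recursion $a_{i+1,j+1}=f(a_{i,j},a_{i,j+1})$ to push the identity downward for parts 3 and 4. Your packaging via the single two-parameter identity $a_{i+s,j+t}=a_{k+s,m+t}$ is slightly cleaner than the paper's step-by-step argument (which for part 4 alternates forward, down, and backward moves), but the content is the same.
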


\begin{proof}
\noindent{\it Part 1\/}: When $\sigma = 1$, each row contains only one state, making the claim trivial.
Now, assume that $\sigma \ge 2$ and that $a_{i, j} = a_{i, k}$ for some $i$ and $j \neq k$.
We must have $a_{i, j + 1} = a_{i, k + 1}$ in order to avoid $p(T) > s(T)$. 
Repeating this procedure for the remaining states on $\mathtt{row}_i$ shows that this row is periodic, contradicting part 2 of Lemma \ref{lemma: properties of tile}. 
 
\noindent{\it Part 2\/}: If $a_{i,j} = a_{k, m}$, for $i \neq k$, then the states to their right must agree, i.e., $a_{i,j+1} = a_{k, m+1}$, in order to avoid $p(T) > s(T)$. 
Repeating this observation for the remaining states on $\mathtt{row}_i$ and $\mathtt{row}_k$ gives the desired result. 
  
  \noindent{\it Part 3\/}:
Assume a column contains repeated state, say $a_{i, j} = a_{k, j}$ for some $i, j$ and $k$. 
By part 2, $\mathtt{row}_i$ is exactly the same as $\mathtt{row}_k$, so that the temporal period of this tile can be reduced, a contradiction.
 
\noindent{\it Part 4\/}:
Assume that $a_{i, j} = a_{k, m}$, for $j \neq m$. 
Then $a_{i, j + 1} = a_{k, m + 1}$ by parts 1 and 2. 
So, $a_{i + 1, j + 1} = a_{k + 1, m + 1}$ by part 1 in Lemma \ref{lemma: properties of tile}.
So, $a_{i + 1, j} = a_{k + 1, m}$, again by  parts 1 and 2.
Now, repeating the previous step for $a_{i + 1, j} = a_{k + 1, m}$ gives the desired result.
\end{proof}

We revisit the remark following Lemma \ref{lemma: properties of tile}:  a tile may have periodic columns, but such a 
tile cannot be simple.

Suppose a tile  $T = (a_{i,j})_{ i = 0,\dots, \tau-1, j = 0, \dots ,\sigma - 1}$ is simple. We will take a closer 
look with circular shifts of rows, so we fix a row, say
the first row $\mathtt{row}_0$.
(We could start with any row, but we pick the first one for concreteness.)  
Let 
$$i = \min \{k = 1, 2, \dots, \tau-1: \mathtt{row}_k = \pi(\mathtt{row}_0), \text{ for some circular shift } \pi: \mathbb{Z}^\sigma \to \mathbb{Z}^\sigma\}$$
be the smallest $i$ such that $\mathtt{row}_i$ is a circular shift of $\mathtt{row}_0$, and let $i = 0$ if and only if $T$ does not have circular shifts of $\mathtt{row}_0$ other than this row itself.
Then this circular shift satisfies $\mathtt{row}_{(j + i) \mod \tau} = \pi(\mathtt{row}_j)$, for all $j = 0, \dots, \tau - 1$ and $i$ is determined by the tile $T$; 
we denote  this circular shift by $\pi_T^r$.
We denote by $\pi_T^c$ the analogous circular shift for columns.

\begin{lem} 
Let $T$ be a simple tile of a PS, and let $d_1 = \text{ord } (\pi_T^r)$ and $d_2 = \text{ord } (\pi_T^c)$. Then $d_1$ and $d_2$ are equal and divide  $\gcd(\tau, \sigma)$.
\end{lem}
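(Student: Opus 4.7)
I would split the claim into two parts: (i) both $d_1$ and $d_2$ divide $\gcd(\tau,\sigma)$; and (ii) $d_1 = d_2$. For (i), Lemma~\ref{lemma: euler totient}(1) gives $d_1\mid \sigma$ directly. To obtain $d_1\mid \tau$, iterate the defining relation $\mathtt{row}_{(j+i)\bmod\tau} = \pi_T^r(\mathtt{row}_j)$ to get $\mathtt{row}_{(ki)\bmod\tau} = (\pi_T^r)^k(\mathtt{row}_0)$, and specialize to $k=d_1$: this yields $\mathtt{row}_{(d_1 i)\bmod\tau} = \mathtt{row}_0$, which by part 3 of Lemma~\ref{lemma:simple-tiles-repeat} (columns have distinct states) forces $d_1 i \equiv 0 \pmod\tau$. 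Combined with the aperiodicity of $\mathtt{row}_0$ (Lemma~\ref{lemma: properties of tile}(2)), which implies that any power $(\pi_T^r)^k$ fixing $\mathtt{row}_0$ must be the identity shift, a standard minimality argument gives $d_1 = \tau/\gcd(\tau,i)$, and in particular $d_1\mid \tau$. The symmetric version with the roles of rows and columns interchanged handles $d_2$.

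For (ii), I would count the number of distinct states $s(T)$ in the tile in two ways. Partition the rows into equivalence classes under the relation ``being a circular shift of each other.'' Using part 1 of Lemma~\ref{lemma: properties of tile} (uniqueness of assignment), one shows that if $\mathtt{row}_{j+k}$ is a circular shift of $\mathtt{row}_j$ by some amount $c$ for one value of $j$, then the same relation holds for every $j$: advancing both rows by one under $f$ preserves the shift relation, so an induction on $j$ closes. Hence the set of valid offsets $k\in\mathbb{Z}/\tau$ is a subgroup of $\mathbb{Z}/\tau$; by minimality of $i$ it equals $\langle i\rangle$, so every row class has exactly $d_1$ elements and there are $\tau/d_1$ classes. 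Rows in the same class are permutations of each other, so they share the same set of $\sigma$ states (distinct by part 1 of Lemma~\ref{lemma:simple-tiles-repeat}); rows in different classes share no state, because if they shared one, part 2 of Lemma~\ref{lemma:simple-tiles-repeat} would make them circular shifts, putting them in the same class. Therefore $s(T) = (\tau/d_1)\cdot\sigma = \sigma\tau/d_1$. The symmetric argument for columns gives $s(T) = \sigma\tau/d_2$, and comparing yields $d_1 = d_2$.

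The step I expect to be the main obstacle is the ``independence of base row'' claim: that a shift relation $\mathtt{row}_{j+k} = \rho(\mathtt{row}_j)$ holding for a single $j$ automatically propagates to every $j$. This is what guarantees the equivalence classes have the uniform size $d_1$ and is the key bridge between the locally defined $\pi_T^r$ and the global count $s(T)$; without it, the counting identity $s(T)=\sigma\tau/d_1$ could fail and the equality $d_1=d_2$ would not follow from a symmetric argument.
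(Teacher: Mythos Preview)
Your argument is correct, but it takes a somewhat longer path than the paper's. The paper proves $d_1=d_2$ by a more local double count: fix a single entry $a_{0,0}$ and count how many times that one state occurs in $T$. By parts~1 and~2 of Lemma~\ref{lemma:simple-tiles-repeat}, $a_{0,0}$ lies in exactly those rows that are circular shifts of $\mathtt{row}_0$, and there are $d_1$ of them; by parts~3 and~4 the same count via columns gives $d_2$, so $d_1=d_2$. For the divisibility, the paper then simply invokes Lemma~\ref{lemma: euler totient}(1) twice: $d_1=\ord(\pi_T^r)\mid\sigma$ and $d_2=\ord(\pi_T^c)\mid\tau$, and since $d_1=d_2$ this common value divides $\gcd(\tau,\sigma)$. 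Your separate derivation of $d_1\mid\tau$ via the orbit relation $d_1=\tau/\gcd(\tau,i)$ is correct but unnecessary once $d_1=d_2$ is in hand.

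What your route buys is the explicit formula $s(T)=\sigma\tau/d_1$, which is precisely the content of the next lemma in the paper (Lemma~\ref{lemma: value of s(T)}); so your proof of $d_1=d_2$ essentially absorbs that lemma as well. Regarding the step you flagged as the main obstacle --- that the shift relation $\mathtt{row}_{j+i}=\pi_T^r(\mathtt{row}_j)$ propagates from $j=0$ to all $j$ --- the paper in fact states this as a property of $\pi_T^r$ in the paragraph preceding the lemma without writing out a proof; your one-line justification via Lemma~\ref{lemma: properties of tile}(1) (applying $f$ preserves a common spatial shift between two rows) is exactly the right way to fill that in.
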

\begin{proof}
Fix an element as $a_{0, 0}$.
By Lemma~\ref{lemma:simple-tiles-repeat}, parts 1 and 2,   $a_{0, 0}$ appears in $d_1$ rows of $T$.
It also appears in $d_2$ columns by Lemma~\ref{lemma:simple-tiles-repeat}, parts 3 and 4.
As a consequence, $d_1 = d_2$.
The divisibility follows from Lemma \ref{lemma: euler totient}.
\end{proof}

\begin{lem} \label{lemma: value of s(T)}
An integer $s\le n$ is the number of states in a simple tile $T = (a_{i,j})_{ i = 0,\dots, \tau-1, j = 0, \dots ,\sigma - 1}$ of PS if and only if there exists $d \bigm| \gcd (\tau, \sigma)$, such that $s = \tau\sigma/d$.  
\end{lem}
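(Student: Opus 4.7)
The plan is to prove the two directions of the biconditional separately: necessity via orbit-counting of rows under the subgroup of circular-shift equivalence, and sufficiency by giving an explicit construction for every admissible $d$.

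For the necessity direction, take a simple tile $T$ and let $d = \ord(\pi_T^r)$, which divides $\gcd(\tau, \sigma)$ by the preceding lemma. Let $i$ be the smallest positive index with $\mathtt{row}_i = \pi_T^r(\mathtt{row}_0)$ (and $i = 0$ iff $\pi_T^r$ is the identity). Since the CA rule commutes with spatial shifts, the equality $\mathtt{row}_{j+i} = \pi_T^r(\mathtt{row}_j)$ propagates to every $j$, so the subgroup $\langle i\rangle \le \mathbb{Z}_\tau$ partitions the $\tau$ row indices into $\tau/d$ orbits, each of size $d$. By parts 1 and 2 of Lemma~\ref{lemma:simple-tiles-repeat}, the $d$ rows in a single orbit contain the same $\sigma$ distinct states, while the contrapositive of part 2 shows that rows in different orbits have disjoint state sets. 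Summing over orbits yields $s(T) = (\tau/d)\sigma = \tau\sigma/d$.

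For the sufficiency direction, fix $d \mid \gcd(\tau, \sigma)$ with $s := \tau\sigma/d \le n$, and set $\tau' = \tau/d$, $\sigma' = \sigma/d$. I would define $T$ with entries in $\{0, 1, \dots, s-1\} \subseteq \mathbb{Z}_n$ by
\[
a_{i,j} = \sigma\,(i \bmod \tau') + \bigl((j + \sigma' \lfloor i/\tau'\rfloor) \bmod \sigma\bigr),
\]
and verify four claims: (i) $T$ takes exactly $s$ values, one for each cell modulo a $d$-to-one repetition pattern; (ii) whenever two horizontally adjacent pairs $(a_{i_1,j_1}, a_{i_1,j_1+1})$ and $(a_{i_2,j_2}, a_{i_2,j_2+1})$ coincide, the values $a_{i_1+1,j_1+1}$ and $a_{i_2+1,j_2+1}$ agree as well, so the assignments extend to a bona fide CA rule $f$ with $p(T) = s(T) = s$ and hence $T$ is simple; (iii) each row is a cyclic interval of $\sigma$ consecutive integers, therefore aperiodic with minimal period $\sigma$; (iv) each column contains $\tau$ distinct values, which reduces to the map $(m, k) \mapsto \sigma m + ((j + \sigma' k) \bmod \sigma)$ being injective on $\{0, \dots, \tau'-1\} \times \{0, \dots, d-1\}$ thanks to the identity $\sigma' d = \sigma$.

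The main obstacle is the sufficiency direction: the arithmetic of the construction is elementary but must be checked to verify all four properties at once, with the identities $\sigma' d = \sigma$ and $\tau' d = \tau$ simultaneously driving CA-consistency (pairs from rows in the same orbit produce the same image) and temporal minimality (column entries visit all $d$ residues $0, \sigma', 2\sigma', \dots, (d-1)\sigma'$ modulo $\sigma$). The necessity direction is conceptually cleaner, hinging on the observation already implicit in the preceding lemma that row-orbits and column-orbits all share the common size $d$.
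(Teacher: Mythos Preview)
Your proposal is correct and follows essentially the same approach as the paper. For necessity you argue, as the paper does, that $d=\ord(\pi_T^r)$ groups the rows into $\tau/d$ blocks whose state sets are disjoint and each of size $\sigma$; for sufficiency your explicit formula $a_{i,j}=\sigma(i\bmod\tau')+((j+\sigma'\lfloor i/\tau'\rfloor)\bmod\sigma)$ is precisely a concrete realization of the paper's construction (fill the first $\tau/d$ rows with $\tau\sigma/d$ distinct states, then let $\mathtt{row}_{\tau/d}=\pi(\mathtt{row}_0)$ for a shift $\pi$ of order $d$, here the shift by $\sigma'$), with the added benefit that your verification of minimality of both periods is spelled out rather than left implicit.
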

\begin{proof}
Let $T = (a_{i,j})_{ i = 0,\dots, \tau-1, j = 0, \dots , \sigma - 1}$.
Assume that $s(T) = s$ and let $d = \text{ord }(\pi_T^r)$.
Then by Lemma \ref{lemma:simple-tiles-repeat}, parts 1 and 2, the first $\tau/d$ rows of $T$ 
contain all states that are in $T$.
As a result, $s = \tau\sigma/d$ and $d = \text{ord }(\pi_T^r) \bigm| \gcd (\tau, \sigma)$.

Now assume that $d \bigm| \gcd (\tau, \sigma)$.
Then there exists a circular shift $\pi: \mathbb{Z}^\sigma \to \mathbb{Z}^\sigma$, such that $\text{ord }(\pi) = d$.
To form a simple tile $T$ with $s(T) = \tau\sigma/d$ states, construct a rectangle of $\tau/d$ rows and $\sigma$ columns using $\tau\sigma/d$ different states in the first $\tau/d$ rows of $T$.
Let $\mathtt{row}_{\tau/d}$ be defined by $\pi (\mathtt{row}_{0})$ and the subsequent rows are all automatically defined by the maps that are assigned in the first $\tau/d$ rows, by Lemma \ref{lemma: properties of tile}, part 1.
\end{proof}

The above lemma gives the possible values of $s(T)$ for a simple tile $T$ and the next one enumerates the number of simple tiles of PS containing $s$ different states.

\begin{lem} \label{lemma: counting simple tile}
The number of simple tiles of PS with temporal periods $\tau$ and spatial period $\sigma$ containing $s$ states is $\varphi(d){n \choose s}(s - 1)!$, where $d = \tau \sigma/s$.
\end{lem}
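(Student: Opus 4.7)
The plan is to count matrix representations of simple tiles containing $s$ states and then divide by the number of representations per tile. Let $d = \tau\sigma/s$; by Lemma~\ref{lemma: value of s(T)}, $d \mid \gcd(\tau, \sigma)$, and any simple tile with $s$ states has its first $\tau/d$ rows holding all $s$ states (each once), with the remaining rows given by $\mathtt{row}_{r + k\tau/d} = \pi^k(\mathtt{row}_r)$ for a unique circular shift $\pi$ with $\ord(\pi) = d$. Accordingly, I construct a matrix representation by (i) choosing $s$ states in $\binom{n}{s}$ ways, (ii) placing them freely on the first $\tau/d$ rows in $s!$ ways, and (iii) selecting $\pi$ in $\varphi(d)$ ways, using Lemma~\ref{lemma: euler totient}. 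A short verification---using shift-invariance for the consistency of assignments and $\ord(\pi) = d$ to see that all $\tau$ rows are distinct---confirms that each such construction yields a valid simple tile; conversely, since any $\tau/d$ consecutive rows of a simple tile contain all $s$ states, every matrix representation of every simple tile is obtained exactly once. This gives $\binom{n}{s} \cdot s! \cdot \varphi(d)$ matrix representations in total.

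For the divisor, I use the rotation action of $\mathbb{Z}_\tau \times \mathbb{Z}_\sigma$ on matrices given by $(r, c) \cdot (a_{i,j}) = (a_{i+r, j+c})$, so that simple tiles correspond to orbits. The key claim is that the stabilizer of a simple tile has order $d$. If the associated $\pi$ is the circular shift by $m$, then $(\tau/d, -m)$ fixes the tile, because $a_{i+\tau/d,\, j-m} = \pi(\mathtt{row}_i)_{j-m} = a_{i,j}$, and the cyclic subgroup it generates has order $d$ since $dm \equiv 0 \pmod{\sigma}$. No further stabilizers exist: any $(r, c)$ with $0 < r < \tau/d$ would make $\mathtt{row}_r$ a circular shift of $\mathtt{row}_0$, violating the minimality of $\tau/d$; $(0, c)$ with $c \neq 0$ is excluded by the spatial aperiodicity of the rows; and any $(k\tau/d, c)$ forces $c \equiv -km \pmod{\sigma}$ by the same aperiodicity, landing back in the cyclic subgroup. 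By orbit-stabilizer, each orbit has $\tau\sigma/d = s$ elements.

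Combining the two counts, the number of simple tiles equals $\binom{n}{s} \cdot s! \cdot \varphi(d) / s = \varphi(d) \binom{n}{s} (s-1)!$. The main obstacle is the stabilizer computation: while exhibiting $(\tau/d, -m)$ as a stabilizer is immediate, ruling out unexpected stabilizers requires a careful case analysis invoking the minimality of both $\tau/d$ (for the row classes) and $\sigma$ (for spatial aperiodicity).
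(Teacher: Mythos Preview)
Your proof is correct and follows essentially the same approach as the paper: fill the first $\tau/d$ rows with $s$ distinct states, choose one of the $\varphi(d)$ circular shifts of order $d$ to determine the remaining rows, and account for rotational equivalence. The paper compresses the last step into the factor $(s-1)!$ without comment, whereas you make it explicit via an orbit--stabilizer computation showing each tile has exactly $\tau\sigma/d=s$ matrix representations; this added rigor is a welcome clarification but not a different method.
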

\begin{proof}
As in the proof of Lemma \ref{lemma: value of s(T)}, if $s(T) = s = \tau\sigma/d$, then $d = \text{ord} (\pi_T^r)$.
Moreover, there are $ {n \choose s} (s - 1)!$ ways to form the first $\tau/d$ rows of $T$.
Then, to uniquely determine $T$, we need to select a circular shift $\pi: \mathbb{Z}^\sigma \to \mathbb{Z}^\sigma$ with $\text{ord }(\pi) = d$ and define $\mathtt{row}_{\tau/d}$ to be $\pi (\mathtt{row}_{0})$.
By Lemma \ref{lemma: euler totient}, there are $\varphi(d)$ ways to do so.
\end{proof}

Consider two different simple tiles $T_1$ and $T_2$ under the rule.
As the final task of this section, we seek a lower bound on the combined number of values of the rule $f$ assigned by $T_1$ \textit{and} $T_2$, in terms of the number of states.
If $s(T_1) = s_1$, then $p(T_1) \ge s_1$, i.e., there are at least $s_1$ values assigned by $T_1$.
If there are $s_2^\prime$ states in $T_2$ that are not in $T_1$, then there are at least $s_2^\prime$ additional values to assign.
Therefore, a lower bound of the number of values to be assigned in $T_1$ and $T_2$ is $s_1 + s_2^\prime$.
The next lemma states that we can increase this lower bound by at least $1$ when $T_1 \cap T_2 \neq \emptyset$.
This fact plays an important role in the proofs of Theorem \ref{theorem: main 1} and Theorem \ref{theorem: main 2}.

\begin{lem}\label{lemma: one more map}
Let $T_1$ and $T_2$ be two different simple tiles 
for the same rule. If $T_1$ and $T_2$ have at least one state in common, then there exist $a_{i,j} \in T_1$ and $b_{k,m} \in T_2$ such that $a_{i,j} = b_{k,m}$ and $a_{i,j+1} \neq b_{k,m+1}$.
\end{lem}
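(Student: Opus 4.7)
The plan is to proceed by contradiction. Suppose that whenever $a_{i,j}\in T_1$ and $b_{k,m}\in T_2$ satisfy $a_{i,j}=b_{k,m}$, one also has $a_{i,j+1}=b_{k,m+1}$. I start from a shared state $a_{i_0,j_0}=b_{k_0,m_0}$, whose existence is guaranteed by hypothesis. Iterating the assumption rightward inductively gives
\[
a_{i_0,\,j_0+\ell}\;=\;b_{k_0,\,m_0+\ell}\qquad\text{for every }\ell\ge 0,
\]
so, once aligned at the shared position, the periodic extensions of $\mathtt{row}_{i_0}$ of $T_1$ and $\mathtt{row}_{k_0}$ of $T_2$ coincide as bi-infinite sequences.

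Let $\sigma_1,\sigma_2$ denote the spatial periods of $T_1,T_2$. Part 2 of Lemma~\ref{lemma: properties of tile} asserts that each row of a tile is aperiodic, so $\sigma_1$ and $\sigma_2$ are the minimal periods of these two rows. The infinite sequence above has both $\sigma_1$ and $\sigma_2$ as periods, so its minimal period divides $\gcd(\sigma_1,\sigma_2)$, which forces $\sigma_1=\sigma_2=:\sigma$. Consequently $\mathtt{row}_{k_0}$ of $T_2$ is a circular shift of $\mathtt{row}_{i_0}$ of $T_1$ by some fixed $s\equiv j_0-m_0\pmod\sigma$.

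I then propagate the agreement downward using determinism of the rule: since $a_{i+1,j}=f(a_{i,j-1},a_{i,j})$ and the analogous identity holds in $T_2$, if two rows of length $\sigma$ are related by the circular shift by $s$, then so are their images under $f$. By induction $b_{k_0+\ell,m}=a_{i_0+\ell,\,m+s}$ for all $\ell,m$. The temporal periodicity of $T_2$, with minimal period $\tau_2$, then gives $\mathtt{row}_{i_0+\tau_2}=\mathtt{row}_{i_0}$ in $T_1$, so $\tau_1\mid\tau_2$ by minimality of $\tau_1$; the symmetric argument gives $\tau_1=\tau_2$. Thus $T_2$ is obtained from $T_1$ by a space-time translation and hence, under the convention that tiles are identified modulo rotations, $T_1=T_2$, contradicting the hypothesis. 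The main obstacle in carrying out this plan is the careful bookkeeping needed to convert the row-level agreement into equality of both spatial and temporal periods, and then into a rotational identification of the two tiles.
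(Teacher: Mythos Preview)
Your argument is correct and follows essentially the same contradiction strategy as the paper: start from a shared state, iterate the assumed equality rightward to force the two rows to coincide as periodic sequences, then use determinism of the rule to propagate downward and conclude the tiles are rotations of one another. The paper's proof is terser, jumping directly from ``$\mathtt{row}_i$ in $T_1$ and $\mathtt{row}_k$ in $T_2$ are equal up to a circular shift'' to ``$T_1$ and $T_2$ must be the same since they are tiles for the same rule''; you supply the missing bookkeeping, in particular the argument that the spatial periods $\sigma_1,\sigma_2$ must agree (via aperiodicity of rows and the common bi-infinite extension) and that the temporal periods must agree (via distinctness of rows in a minimal tile). Since the lemma is applied in Theorem~\ref{theorem: main 2} to tiles with possibly different periods $(\tau,\sigma)$ and $(\tau',\sigma')$, your extra care is warranted.
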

\begin{proof}
As $T_1$ and $T_2$ have at least one state in common, we may pick $a_{i,j} \in T_1$ and $b_{k,m} \in T_2$, such that $a_{i, j} = b_{k, m}$.
If $a_{i, j + 1} \neq b_{k, m + 1}$, then we are done.
Otherwise, we repeat this procedure for $a_{i, j + 1}$ and $b_{k, m + 1}$ and see if $a_{i, j + 2} = b_{k, m + 2}$.
We repeat this procedure until we find two pairs such that $a_{i, j + q} = b_{k, m + q}$ and $a_{i, j + q + 1} \neq b_{k, m + q+1}$.
If we fail to do so, then $\mathtt{row}_i$ in $T_1$ and $\mathtt{row}_k$ in $T_2$ must be equal, up to a circular shift.
This implies that $T_1$ and $T_2$ must be the same since they are tiles for same rule, a contradiction.
\end{proof}

\section{Proofs of main results}

We will give a separate proof of Theorem~\ref{theorem: main 1}
first, for transparency, and then we show how to adapt the argument
to prove the stronger result,Theorem~\ref{theorem: main 2}.

\begin{proof}[Proof of Theorem~\ref{theorem: main 1}.]
We begin with the bounds
$$\mathbb{P}(\mathcal{P}_{\tau, \sigma, n}^{(0)} \neq \emptyset) 
\le \mathbb{P}(\mathcal{P}_{\tau, \sigma, n} \neq \emptyset) 
\le \mathbb{P}(\mathcal{P}_{\tau, \sigma, n}^{(0)} \neq \emptyset) + \sum_{\ell = 1}^{\tau\sigma} \mathbb{P}(\mathcal{P}_{\tau, \sigma, n}^{(\ell)} \neq \emptyset).$$
For $\ell\ge 1$, 
\begin{equation}\label{thm1pf-eq01}
\mathbb{P}(\mathcal{P}_{\tau, \sigma, n}^{(\ell)} \neq \emptyset) 
\le \mathbb{E(|\mathcal{P}}_{\tau, \sigma, n}^{(\ell)}|)
 =\sum_{s = 1}^{\tau\sigma}{n\choose s}g_{\tau, \sigma}^{(\ell)}(s)\frac{1}{n^{s + \ell}}
=\cO\left(\frac{1}{n^\ell}\right),
\end{equation}
where $g_{\tau, \sigma}^{(\ell)}(s)$ counts the number of $\tau \times \sigma$ tiles that contain $s$ different states and lag is $\ell$. 
Here, $1/n^{s+\ell}$ is the probability of such a tile (determined by a PS), as there are $s + \ell$ assignments to 
make by a random map, and each assignment occurs independently with probability $1/n$.
As $\ell \ge 1$, $\mathbb{P}(\mathcal{P}_{\tau, \sigma, n} \neq \emptyset) = \mathbb{P}(\mathcal{P}_{\tau, \sigma, n}^{(0)} \neq \emptyset) + \cO(1/n)$ as $n \to \infty$.

To find $\mathbb{P}(\mathcal{P}_{\tau, \sigma, n}^{(0)} \neq \emptyset)$ as $n \to \infty$, let $1= d_1<\ldots<d_u=\gcd(\sigma,\tau)$   be the common divisors of $\tau$ and $\sigma$ and $s_j = \tau\sigma/d_j$, for $j = 1, \dots, u$, be the possible numbers of states in simple tiles. 
We index the simple tiles that have $s_j$ states in an arbitrary way, so that $T_k^{(j)}$ be the $k$th simple tile that has 
$s_j$ states. Here $k = 1, \dots, N_j$ and $N_j=\varphi(d_j){n \choose s_j}(s_j - 1)!$ is the number of simple tiles with $s_j$ states (by Lemma \ref{lemma: counting simple tile}).
Let $I_k^{(j)}$ be the indicator random variable that $T_k^{(j)}$ is a tile determined by a PS. Let $W_n = \sum_{j=1}^u\sum_{k=1}^{N_j} I_k^{(j)}$ and 
\begin{align*}
\lambda_n 
& = \mathbb{E}W_n 
 = \sum_{j=1}^u\sum_{k=1}^{N_j} \mathbb{E}I_k^{(j)} 
 = 
\sum_{j = 1}^u\varphi(d_j){n\choose s_j} (s_j - 1)!\frac{1}{n^{s_j}}\\
& \xrightarrow{n\to\infty} \sum_{j = 1}^u\varphi(d_j)\frac{1}{s_j}\\
& = \sum_{j = 1}^u \varphi(d_j)\frac{d_j}{\tau \sigma}
 = \frac{1}{\tau\sigma}\sum_{d \bigm| \gcd(\tau, \sigma)} \varphi(d)d
 = \lambda_{\tau, \sigma}. 
\end{align*}
We next show that
$d_{\rm TV}(W_n, \Poi(\lambda_n)) \to 0$ as $n \to \infty$, which will conclude the proof. As orthogonal tiles have independent 
assignments, Lemma \ref{lemma: chen-stein lemma} implies that
\begin{equation}\label{thm1pf-eq1}
d_{TV}(W_n, \Poi(\lambda_n)) \le \min(1, \lambda_n^{-1})\left[\sum_{j, k}\left(\mathbb{E}I_k^{(j)}\right)^2 + 
\sum_{\substack{j,k, i, m \\ T_m^{(i)} \not\perp T_k^{(j)}}} \left( \mathbb{E}I_k^{(j)}\mathbb{E}I_m^{(i)} + \mathbb{E}I_k^{(j)}I_m^{(i)}\right) \right].
\end{equation}
To bound $\sum_{j, k}\left(\mathbb{E}I_k^{(j)}\right)^2$, fix a $j \in \{1, \dots, u\}$ and note that
\begin{equation}\label{thm1pf-eq11}
\sum_{k=1}^{N_j} \left(\mathbb{E}I_k^{(j)}\right)^2 = \varphi(d_j){n \choose s_j} (s_j - 1)! \left( \frac{1}{n^{s_j}}\right)^2 = \cO\left(\frac{1}{n^{s_j}}\right). 
\end{equation} 
It follows that $\sum_{j, k}\left(\mathbb{E}I_k^{(j)}\right)^2 = \cO\left(1/n^{\text{lcm}(\tau, \sigma)}\right) \to 0$, as $n \to \infty$.
It remains to bound the sum over $j,k,i,m$ in \ref{thm1pf-eq1}.
For a fixed $i, j\in \{1, \dots, u\}$, 
\begin{equation}\label{thm1pf-eq2}
\begin{aligned}
&\sum_{k = 1}^{N_j} \sum_{\substack{m = 1 \\ T_m^{(i)} \not\perp T_k^{(j)}}}^{N_i} 
\left(\mathbb{E}I_k^{(j)}\mathbb{E}I_m^{(i)}+
 \mathbb{E}I_k^{(j)}I_m^{(i)}\right)\\
& \le \sum_{k = 1}^{N_j} \sum_{\substack{m = 1 \\ T_m^{(i)} \cap T_k^{(j)} \neq \emptyset}}^{N_i} 
\left(\mathbb{E}I_k^{(j)}\mathbb{E}I_m^{(i)}+
 \mathbb{E}I_k^{(j)}I_m^{(i)}\right)\\
& = \sum_{k = 1}^{N_j} \sum_{h = 1}^{\min(s_i, s_j)} \sum_{\substack{m = 1 \\ |T_m^{(i)} \cap T_k^{(j)}|  = h}}^{N_i} \mathbb{E}I_k^{(j)}\mathbb{E}I_m^{(i)}
+ \sum_{k = 1}^{N_j} \sum_{h = 1}^{\min(s_i, s_j)} \sum_{\substack{m = 1 \\ |T_m^{(i)} \cap T_k^{(j)}|  = h}}^{N_i} \mathbb{E}I_k^{(j)}I_m^{(i)},\\
\end{aligned}
\end{equation}
where the inequality holds because two tiles that share an assignment have to share at least one state. Label the two 
triple sums on the last line of (\ref{thm1pf-eq2}) 
$S_{ij}^{(1)}$ and $S_{ij}^{(2)}$. 
Now, fix also an $h\in\{1, \dots, \min(s_i, s_j)\}$.
We first compute
\begin{align*}
\sum_{k = 1}^{N_j}  \sum_{\substack{m = 1 \\ |T_m^{(i)} \cap T_k^{(j)}| = h}}^{N_i} \mathbb{E}I_k^{(j)}\mathbb{E}I_m^{(i)} & = \varphi(d_j) {n \choose s_j}(s_j - 1)!\varphi(d_i){s_j \choose h}{n-s_j \choose s_i - h}(s_i - 1)!\frac{1}{n^{s_j}}\frac{1}{n^{s_i}}\\
& = \cO\left(\frac{1}{n^h}\right), 
\end{align*} 
and therefore $S_{ij}^{(1)}=\cO\left(1/n\right)$. Next, 
we estimate
\begin{align*}
\sum_{k = 1}^{N_j}  \sum_{\substack{m = 1 \\ |T_m^{(i)} \cap T_k^{(j)}|  = h}}^{N_i} \mathbb{E}I_k^{(j)}I_m^{(i)} & \le \varphi(d_j) {n \choose s_j}(s_j - 1)!\varphi(d_i){s_j \choose h}{n-s_j \choose s_i - h}(s_i - 1)!\frac{1}{n^{s_j}}\frac{1}{n^{s_i - h}} \frac{1}{n}\\
& = \cO\left(\frac{1}{n}\right), 
\end{align*}
and therefore $S_{ij}^{(2)}=\cO\left(1/n\right)$.
The inequality and the three powers of $n$ above are 
justified as follows: $1/n^{s_j}$ as there are $s_j$ states in in $T_k^{(j)}$, thus at least as many assignments; 
$1/n^{s_i - h}$ as there are $s_i - h$ states in $T_m^{(i)}$ that are not in $T_k^{(j)}$, thus at least as many assignments; 
and $1/n$ by Lemma \ref{lemma: one more map}, as $T_m^{(i)}$ and $T_k^{(j)}$ have $h \ge 1$ states in common and so there is at least one additional assignment. It follows that 
$d_{TV}(W_n, \Poi(\lambda_n))$ is bounded above by a constant 
times 
$$
\frac{1}{n^{\text{lcm}(\tau, \sigma)}}+\sum_{i,j}\left(S_{ij}^{(1)}+S_{ij}^{(2)}\right)= \cO\left(\frac{1}{n}\right), 
$$
which gives the desired result.
\end{proof}
 
We now give the proof of Theorem \ref{theorem: main 2},  
which mainly adds some notational complexity to the previous 
proof.

\begin{proof}[Proof of Theorem \ref{theorem: main 2}.]
Again, we begin with the bounds
$$\mathbb{P}(\mathcal{P}_{\Tau, \Sigma, n}^{(0)} \neq \emptyset) \le \mathbb{P}(\mathcal{P}_{\Tau, \Sigma, n} \neq \emptyset) \le \mathbb{P}(\mathcal{P}_{\Tau, \Sigma, n}^{(0)} \neq \emptyset) + \sum_{\ell \neq 0} \mathbb{P}(\mathcal{P}_{\Tau, \Sigma, n}^{(\ell)} \neq \emptyset),$$
where $\mathcal{P}_{\Tau, \Sigma, n}^{(\ell)}$ is the set of PS with periods $(\tau, \sigma) \in \Tau\times \Sigma$ whose tile has lag $\ell$.
Note that the summation is finite since $\Tau$ and $\Sigma$ are.
For $\ell\ge 1$, as in (\ref{thm1pf-eq01}), 
\begin{align*}
\mathbb{P}(\mathcal{P}_{\Tau, \Sigma, n}^{(\ell)} \neq \emptyset) 
&\le \sum_{(\tau, \sigma) \in \Tau\times \Sigma} \mathbb{P}(\mathcal{P}_{\tau, \sigma, n}^{(\ell)} \neq \emptyset)
 =\cO\left(\frac{1}{n^\ell}\right).
\end{align*}
As a consequence, $\mathbb{P}(\mathcal{P}_{\Tau, \Sigma, n} \neq \emptyset) = \mathbb{P}(\mathcal{P}_{\Tau, \Sigma, n}^{(0)} \neq \emptyset) + \cO(1/n)$ as $n \to \infty$.

To find $\mathbb{P}(\mathcal{P}_{\Tau, \Sigma, n}^{(0)} \neq \emptyset)$ as $n \to \infty$, we adopt the notation $u$, 
$d_j$, $s_j$, $T_k^{(j)}$ and $I_k^{(j)}$ from the proof of Theorem \ref{theorem: main 1}, for a fixed $\sigma$ and $\tau$. 
The dependence of these quantities  
on $\sigma$ and $\tau$ will be suppressed from the 
notation, as the periods are taken from a finite range and thus do not affect the computation that follows. Now,
$W_n = \sum_{(\tau, \sigma)}\sum_{j=1}^u\sum_{k=1}^{N_j} I_k^{(j)}$ and
$$\Lambda_n = \sum_{(\tau, \sigma)}\sum_{j=1}^u\sum_{k=1}^{N_j} \mathbb{E}I_k^{(j)} \to \sum_{(\tau, \sigma) \in \Tau \times \Sigma}\lambda_{\tau, \sigma} = \lambda_{\Tau, \Sigma},$$
as $n \to \infty$.
It remains to show that
$d_{TV}(W_n, \Poi(\Lambda_n)) \to 0$ as $n \to \infty$. From
Lemma~\ref{lemma: chen-stein lemma}, 
\begin{equation}\label{thm2pf-eq1}
\begin{aligned}
&d_{\rm TV}(W_n, \Poi(\Lambda_n))\\& \le \min(1, \Lambda_n^{-1})\left[\sum_{(\tau, \sigma)}\sum_{j, k}\left(\mathbb{E}I_k^{(j)}\right)^2 + \sum_{(\tau, \sigma)}\sum_{j, k} \sum_{(\tau', \sigma')} \sum_{\substack{i, m \\ T_m^{(i)} \not\perp T_k^{(j)}}} \left( \mathbb{E}I_k^{(j)}\mathbb{E}I_m^{(i)} + \mathbb{E}I_k^{(j)}I_m^{(i)}\right) \right].
\end{aligned}
\end{equation}
To bound
the double sum in (\ref{thm2pf-eq1}), observe that, for a 
fixed $(\tau,\sigma)$, the sum over $j,k$ is 
$\cO\left(1/n^{\lcm(\tau, \sigma)}\right)$ by 
(\ref{thm1pf-eq11}). As 
$\min_{\tau,\sigma}\lcm(\tau, \sigma)\ge 1$, the double sum 
in (\ref{thm2pf-eq1}) is $\cO(1/n)$.  
%
%

To bound the quadruple sum in (\ref{thm2pf-eq1}), fix a 
$(\tau, \sigma)$ for $I_k^{(j)}$, a $(\tau', \sigma')$ for $I_m^{(i)}$, and  $i, j\in \{1, \dots, u\}$. Then the  
sum over the remaining indices is bounded by $S_{ij}^{(1)}
+S_{ij}^{(2)}$, exactly as in (\ref{thm1pf-eq1}), except 
that now $S_{ij}^{(1)}$ and $S_{ij}^{(2)}$ also depend on the 
periods. The arguments that give $S_{ij}^{(1)}=\cO(1/n)$ and $S_{ij}^{(2)}=\cO(1/n)$ remain equally valid, and again 
imply $d_{TV}(W_n, \Poi(\Lambda_n))=\cO\left(1/n\right)$. 
\end{proof}

The proof of Corollary \ref{corollary: min period} is now straightforward.

\begin{proof}[Proof of Corollary \ref{corollary: min period}.]
Note that $\mathbb{P}(Y_{\sigma, n} \le y) = \mathbb{P}(\mathcal{P}_{[1, y], \{\sigma\}, n} \neq \emptyset) \to 1 - \exp\left(-\lambda_{[1, y], \{\sigma\}}\right)$, as $n \to \infty$, where $\lambda_{[1, y], \{\sigma\}} = \sum_{\tau = 1}^y \lambda_{\tau, \sigma}$. 
\end{proof}

For $\sigma = 1, 2, 3$ and $4$, the corresponding $\lambda_{\tau, \sigma}$ are 
$$
\lambda_{\tau, 1} =\frac{1}{\tau},\quad
\lambda_{\tau, 2} 
=
\begin{cases}
\frac{3}{2\tau} , & 2 \mid \tau\\
\frac{1}{2\tau} , & 2 \nmid  \tau
\end{cases},\quad
\lambda_{\tau, 3} 
= 
\begin{cases}
\frac{7}{3\tau} , & 3 \mid \tau\\
\frac{1}{3\tau} , & 3 \nmid  \tau
\end{cases}, \quad
\lambda_{\tau, 4} 
= 
\begin{cases}
\frac{11}{4\tau}, & \tau = 0 \mod 4\\
\frac{3}{4\tau}, & \tau = 2 \mod 4\\
\frac{1}{4\tau}, & \tau = 1, 3\mod 4 
\end{cases}.
$$
In Figure~\ref{figure: sims}, we present 
computer simulations to test how close the distribution of $Y_{\sigma, n}$ is to its limit for moderately large $n$
for the above four $\sigma$'s.
To compute $Y_{\sigma, n}(f)$, for every $f$ in the samples, 
we apply Algorithm \ref{algorithm: configuration digraph}.

\begin{figure}[ht!]%
    \centering
    \subfloat[{$\sigma=1$, $n=100$}]{{    \includegraphics[width = 0.45\textwidth]{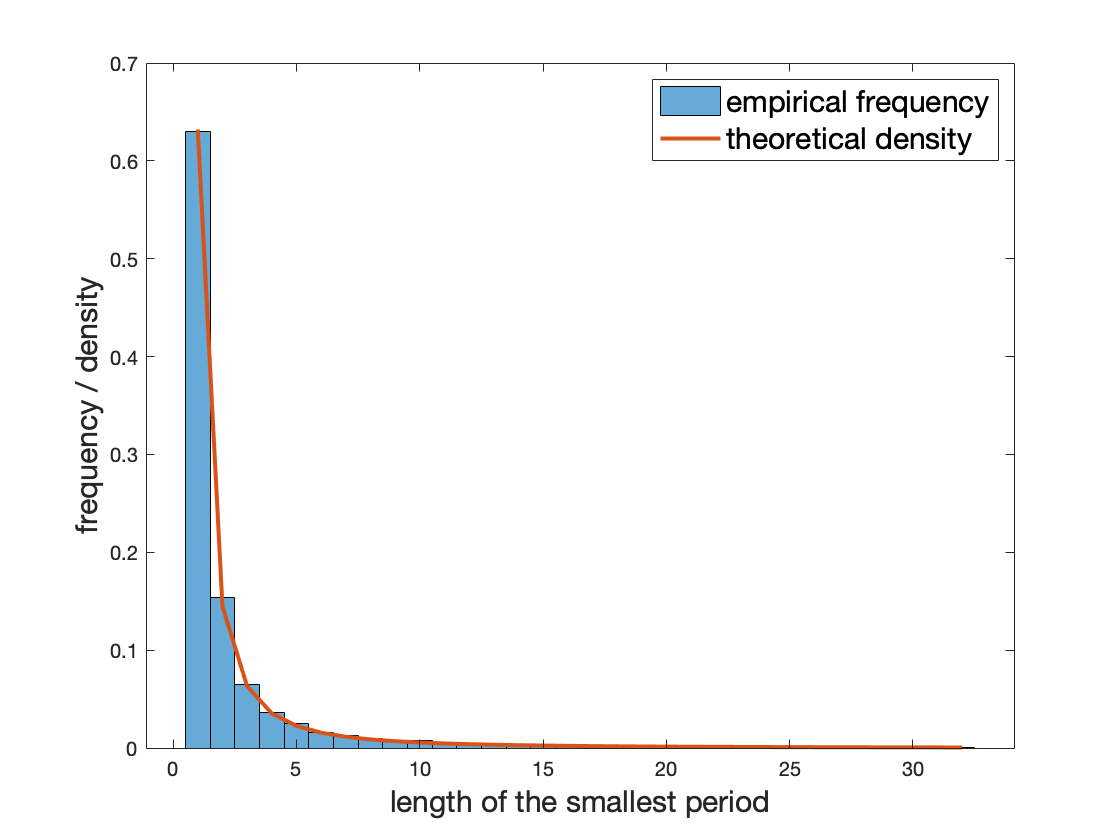} }}%
    \hspace{0cm}
    \subfloat[{$\sigma=2$, $n=100$}]{{ \includegraphics[width = 0.45\textwidth]{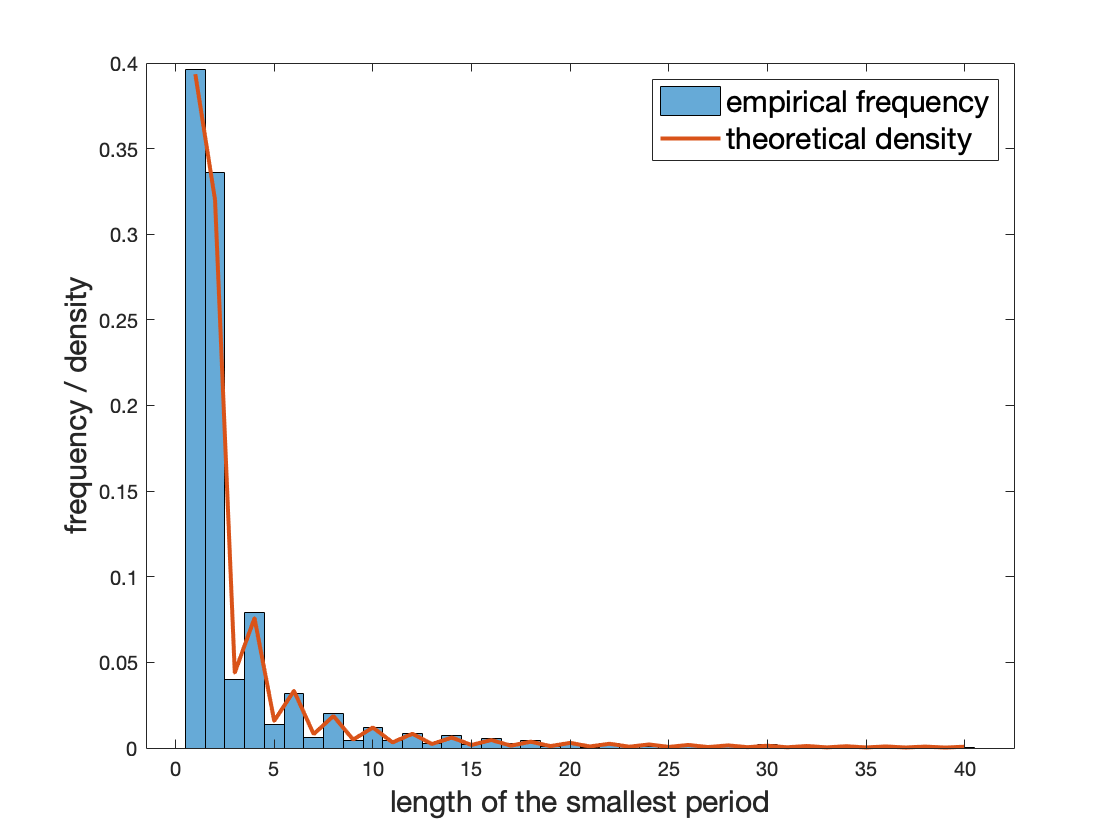} }}%
    \hspace{0cm}
    \subfloat[{$\sigma=3$, $n=60$}]{{\includegraphics[width = 0.45\textwidth]{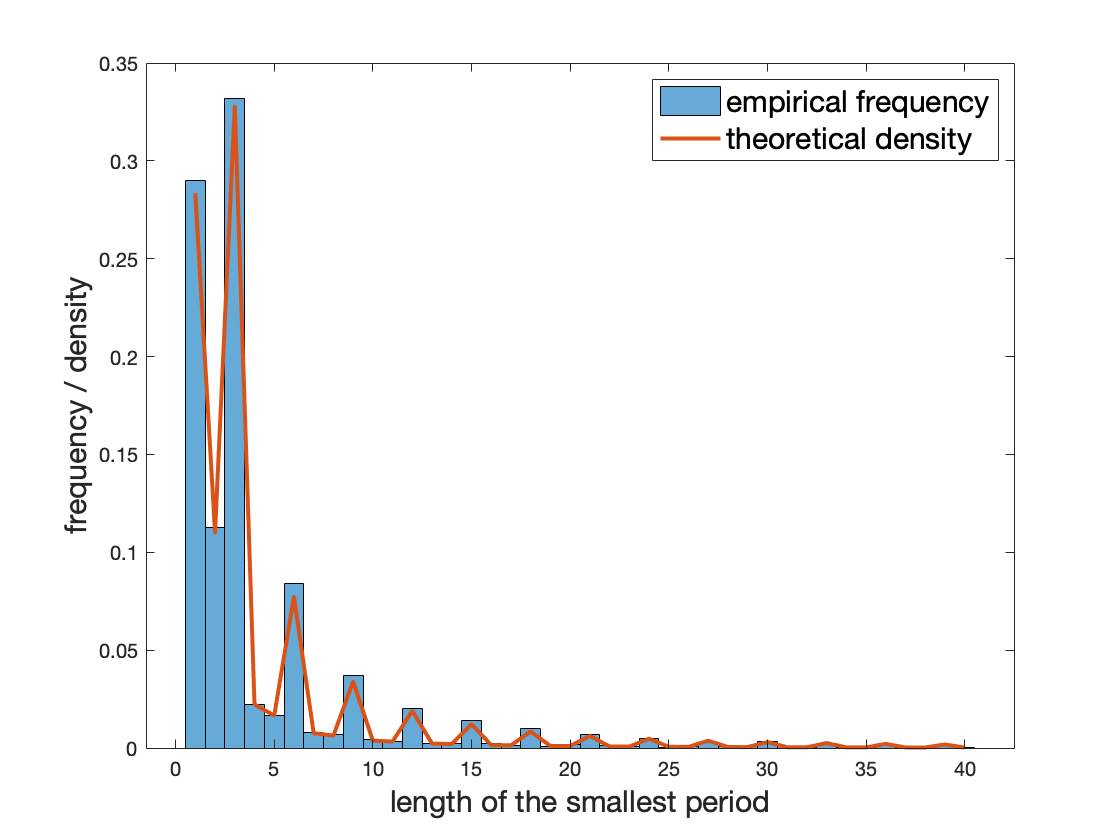} }}
    \hspace{0cm}
    \subfloat[{$\sigma=4$, $n=20$}]{{\includegraphics[width = 0.45\textwidth]{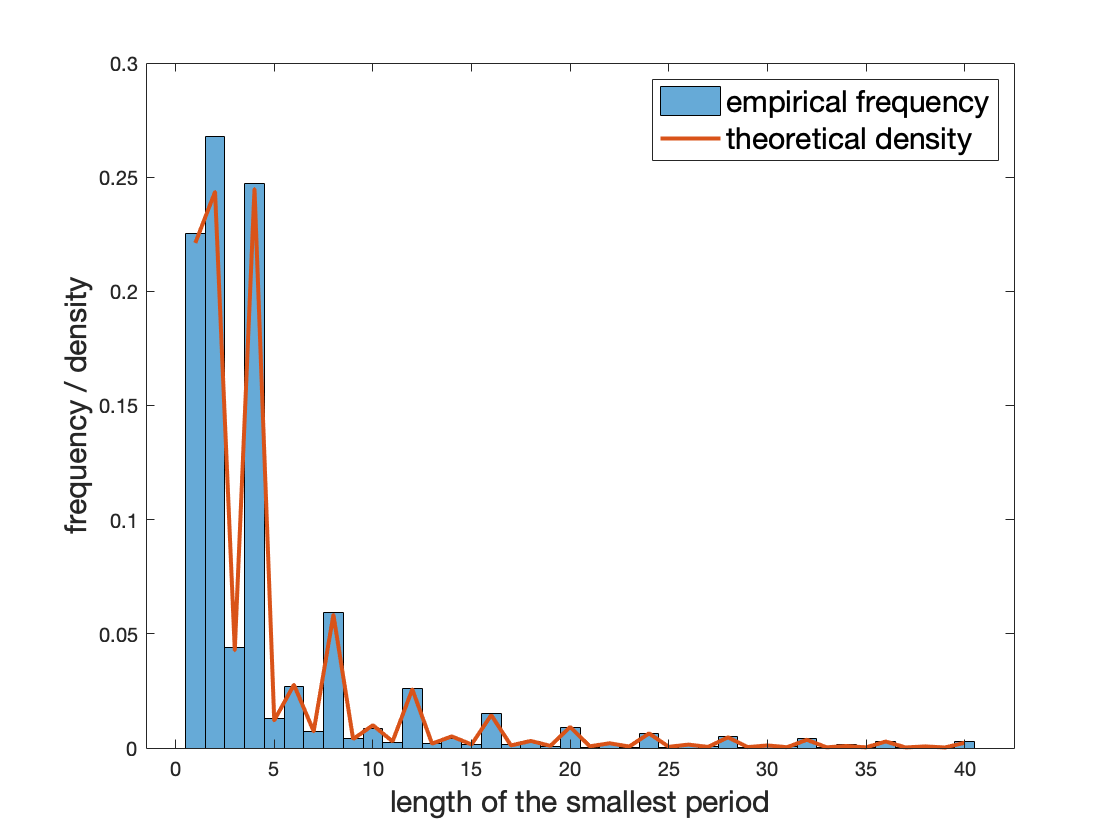} }}
    
    \caption{Lengths of the smallest temporal periods of PS with spatial periods $\sigma=1$ to $\sigma = 4$ and various $n$. In each case, a histogram from a random sample from 10,000 rules is compared to the theoretical limiting distribution as $n \to \infty$, given by Corollary~\ref{corollary: min period}.}%
    \label{figure: sims}%
\end{figure}

\section{Discussion and open problems}

In this paper, we initiate the study of periodic solutions for one-dimensional CA with random rules.  
Our main focus is the limiting probability of existence of a PS, when the rule is uniformly selected and the number of states approaches infinity, and we show  (Corollary \ref{corollary: min period}) that 
the smallest temporal period of PS with a given spatial period $\sigma$ is stochastically bounded.

By a similar argument, we can also obtain an analogous result in which we fix the temporal period instead of the spatial period. Define another random variable
$$
Y_{\tau, n}^\prime = \min\{\sigma: \mathcal{P}_{\tau, \sigma, n} \neq \emptyset\},
$$
which is the smallest spatial period of a PS given a temporal period $\tau$.
For example, for the four rules in Figure \ref{figure: 4 examples}, we may verify that, by Algorithm \ref{algorithm: PS}, 
$Y_{1, 3}^\prime (\mathit{012200210}) = 1$ ($0 \to 0$), 
$Y_{2, 3}^\prime (\mathit{021102120}) = 2$ ($12 \to 21 \to 12$), 
$Y_{3, 3}^\prime(\mathit{100112122}) = 3$ ($102\to 021 \to 210 \to 102$) and 
$Y_{4, 3}^\prime(\mathit{101201021}) = 4$
($0101 \to 2012 \to 1010 \to 0122 \to 0101$), with one cycle 
that generates the minimal PS given parenthetically for 
each case.  

\begin{cor}\label{cor:Y-prime}
The random variable $Y_{\tau, n}^\prime$ converges to a nontrivial distribution as $n \to \infty$.
\end{cor}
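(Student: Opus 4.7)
The plan is to reduce to Theorem~\ref{theorem: main 2} exactly as in the proof of Corollary~\ref{corollary: min period}, but with the roles of temporal and spatial periods interchanged. First I would write, for any fixed $y\in\N$,
$$
\mathbb{P}(Y_{\tau, n}^\prime \le y) = \mathbb{P}\bigl(\mathcal{P}_{\{\tau\}, [1, y], n} \neq \emptyset\bigr),
$$
since the event on the left is precisely the event that some PS has temporal period $\tau$ and spatial period in $\{1,2,\dots,y\}$. As $\{\tau\}\times[1,y]$ is a finite subset of $\N\times\N$, Theorem~\ref{theorem: main 2} applies and yields
$$
\mathbb{P}(Y_{\tau, n}^\prime \le y) \to 1 - \exp\bigl(-\lambda_{\{\tau\}, [1, y]}\bigr) =: F_\tau(y)
$$
as $n\to\infty$, where $\lambda_{\{\tau\},[1,y]}=\sum_{\sigma=1}^y \lambda_{\tau,\sigma}$.

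Next I would verify that $F_\tau$ defines a proper nontrivial limiting distribution on $\N$. Since $F_\tau$ is manifestly nondecreasing and $F_\tau(1)=1-\exp(-1/\tau)\in(0,1)$, it suffices to check that $F_\tau(y)\to 1$ as $y\to\infty$, i.e., that $\sum_{\sigma=1}^\infty \lambda_{\tau,\sigma}=\infty$. Retaining only the $d=1$ term in the definition~\eqref{equation: lambda} gives $\lambda_{\tau,\sigma}\ge 1/(\tau\sigma)$, so the series diverges by comparison with the harmonic series. Combined with the pointwise convergence above, this establishes weak convergence of $Y_{\tau,n}^\prime$ to the distribution with CDF $F_\tau$, which is genuinely nontrivial since it is supported on at least two values.

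I do not expect any real obstacle: the statement is a direct analogue of Corollary~\ref{corollary: min period}, and the only content beyond invoking Theorem~\ref{theorem: main 2} is the elementary lower bound $\lambda_{\tau,\sigma}\ge 1/(\tau\sigma)$ needed to rule out a defective limit. The symmetry between $\tau$ and $\sigma$ in Theorem~\ref{theorem: main 2} is what makes this work without any new argument; if anything deserves attention, it is merely confirming that the set of spatial periods $[1,y]$ is indeed finite for each fixed $y$ so that the hypothesis of the theorem is satisfied.
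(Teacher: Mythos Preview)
Your proposal is correct and follows essentially the same approach the paper indicates: the paper does not write out a separate proof of this corollary but simply states that it follows ``by a similar argument'' to Corollary~\ref{corollary: min period}, which is precisely the role-reversal you carry out. You go a bit further than the paper's proof of Corollary~\ref{corollary: min period} by explicitly verifying via $\lambda_{\tau,\sigma}\ge 1/(\tau\sigma)$ that the limiting distribution is proper (non-defective), which is a worthwhile addition since, unlike in the spatial case, for finite $n$ a rule need not have any PS with temporal period exactly $\tau$.
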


Perhaps the most natural generalization of Theorem \ref{theorem: main 2} would relax the condition that $\Tau$ and $\Sigma$ are finite.
The first case to consider surely is  when 
either $\Tau = \mathbb{N}$ or 
$\Sigma = \mathbb{N}$. For example,
it is clear that $\mathbb{P}(\mathcal{P}_{\mathbb{N}, \mathbb{N}, n} \neq \emptyset) = \mathbb{P}(\mathcal{P}_{\mathbb{N}, \{1\}, n} \neq \emptyset) = 1$, as any constant initial configuration 
eventually generates a PS with spatial period 1.

Now, consider a general $\sigma\ge 2$.
Let $\xi_0$ be a periodic configuration of spatial period $\sigma$.
Under any CA rule $f$, $\xi_1$ maintains the spatial periodicity, 
hence $\xi_t$ eventually enters into a PS, whose spatial period is however 
a divisor of $\sigma$, not necessarily $\sigma$ itself.
For this reason, we cannot reach an immediate conclusion 
about $\lim\mathbb{P}(\mathcal{P}_{\mathbb{N}, \{\sigma\}, n} \neq \emptyset)$, as $n \to \infty$.
We also refer the readers to \cite{gl2}, in which the reduction of temporal periods is explored in more detail.

For a fixed temporal period $\tau$, the matter is even less 
clear as a rule may not have a PS with temporal period that divides $\tau$.
For a trivial example with $\tau$ odd and $n = 2$, consider 
the ``toggle'' rule that always changes the current state and  
thus $\xi_{t+1}=1 - \xi_t$ and any initial state results 
in temporal period $2$. 
Thus we formulate the following intriguing open problem.

\begin{ques}
Let $\tau, \sigma \in \mathbb{N}$. 
What are the behaviors of $\mathbb{P}(\mathcal{P}_{\{\tau\}, \mathbb{N}, n} \neq \emptyset)$ and $\mathbb{P}(\mathcal{P}_{ \mathbb{N}, \{\sigma\}, n} \neq \emptyset)$, as $n \to \infty$ ?
\end{ques}

Another natural question addresses the case when $\sigma$ and $\tau$ increase with $n$.

\begin{ques}
For positive real numbers $a, b, c, d, \alpha, \beta, \gamma$ and $\delta$, what is the asymptotic behavior of $\mathbb{P}\left( \mathcal{P}_{I_1, I_2, n} \neq \emptyset\right)$, where $I_1 = [an^\alpha, bn^\beta]$ and $I_2 = [cn^\gamma, dn^\delta]$?
\end{ques}

A wider topic for further research is to 
investigate how different the behavior 
of the shortest temporal period changes if we choose a 
random rule from a subset of the set of all rules. There are, 
of course, many possibilities for such a subset, and we 
selected two natural ones below. In each case, we denote 
the resulting random variable with the same 
letter $Y_{n,\sigma}$.

A rule is \textbf{left permutative} if the map 
$\psi_b:\Z_n\to\Z_n$ given by $\psi_b(a)=f(a,b)$ is a permutation for every $b\in \Z_n$. 
Permutative rules, such as the 
famous {\it Rule 30\/}~\cite{wol-ran, jen3}, are 
good candidates for generation of long temporal periods. 

\begin{ques} Let $\mathcal L$ be the set of all $(n!)^n$ permutative rules. Choosing one of these rules uniformly at random from  $\mathcal L$, what is the asymptotic behavior of $Y_{n,\sigma}$?
\end{ques}

Our final question concerns the most widely studied special 
class of CA, the additive rules~\cite{martin1984algebraic}. Such  
a rule is given by $f(a,b)=\alpha a+\beta b$, for some 
$\alpha,\beta\in \Z_n$. 

\begin{ques} Let $\mathcal A$ be the set of all $n^2$ additive rules. Again, what is the asymptotic behavior of $Y_{n,\sigma}$
if a rule from $\mathcal A$ is chosen uniformly at random?
\end{ques}


\section*{Acknowledgements}
Both authors were partially supported by the NSF grant DMS-1513340.
JG was also supported in part by the Slovenian Research Agency (research program P1-0285). 

\bibliography{references}

\end{document}